\documentclass[11pt]{amsart}
\usepackage{amsfonts,amssymb, amsthm,amsmath}
\usepackage{indentfirst}
\usepackage{amssymb,mathrsfs,graphicx,subfigure, enumerate}
\usepackage{amsmath,amsfonts,amscd,amsthm,bbm}
\usepackage{extpfeil}
\usepackage{graphicx,colortbl}
\usepackage{graphicx}
\usepackage{subfigure}
\usepackage {stfloats}
\usepackage{pstricks}
\usepackage{float}
\usepackage{epsfig}
\usepackage{caption}
\usepackage[pagewise]{lineno}
\usepackage{bm}
\graphicspath{{Figure/}}
\usepackage[english]{babel}
\usepackage{tikz}
\usepackage[title]{appendix}
\usepackage[T1]{fontenc}
\usepackage[colorlinks=true]{hyperref}
\hypersetup{urlcolor=red, citecolor=blue}

\allowdisplaybreaks[4]

\title[Consensus and Flocking with Memory]{Exponential Consensus and Flocking in Multi-Agent Systems with Infinite Fading Memory}

\author[C. Pignotti]{Cristina Pignotti}
\address[Cristina Pignotti]{\newline Dipartimento di Ingegneria e Scienze dell'Informazione e Matematica, Universit\`{a} dell'Aquila, 67100 L'Aquila, Italy}
\email{cristina.pignotti@univaq.it}

\author[Y.-Q. Wang]{Yu-Qing Wang}
\address[Yu-Qing Wang]{\newline School of Mathematical Sciences, Dalian University of Technology, Dalian 116024, China \newline \& \newline Dipartimento di Ingegneria e Scienze dell'Informazione e Matematica, Universit\`{a} dell'Aquila, 67100 L'Aquila, Italy}
\email{yqwang202309@163.com}

\newtheorem{theorem}{Theorem}[section]
\newtheorem{lemma}{Lemma}[section]

\newtheorem{remark}{Remark}[section]
\newtheorem{definition}{Definition}[section]
\newtheorem{assumption}{Assumption}

\numberwithin{equation}{section}

\newcommand{\x}{\bm x}

\def\charf {\mbox{{\text 1}\kern-.24em {\text l}}}

\newcommand{\R}{\mathbb R}

\def\bv{\boldsymbol v}

\begin{document}
\date{\today}

\subjclass[2020]{34D05, 34K25, 45J05, 45M05, 92D25.}
\keywords{Alignment models, Fading memory, Consensus, Flocking.}
\thanks{\textbf{Acknowledgment.} C. Pignotti is member of Gruppo Nazionale per l'Analisi Matematica,
	la Probabilità e le loro Applicazioni (GNAMPA) of the Istituto Nazionale di
	Alta Matematica (INdAM). The work of Y.-Q. Wang was supported by the State Scholarship Fund from the China Scholarship Council in 2025 (File No. 202506060082).}

\begin{abstract}
In this paper, we study the emergent collective dynamics of multi-agent systems driven by infinite distributed fading memory of Volterra type. We establish a unified theoretical framework covering both first-order opinion consensus dynamics and second-order velocity alignment flocking kinematics. By introducing Dafermos past-history transformations, the governing integro-differential systems are reformulated into dynamical systems on an extended product Hilbert spaces. For first-order dynamics, we prove that fading memory inherently provides a hidden dissipative mechanism, guaranteeing unconditional global exponential consensus with or without instantaneous communication forces. For second-order dynamics, we obtain unconditional exponential flocking for the pure fading memory system and we give a sufficient condition for flocking when an instantaneous interaction is also present.
In particular, this condition is always satisfied, namely the flocking occurs unconditionally, when the influence function has a divergent tail.
\end{abstract}
\maketitle \centerline{\date}

\tableofcontents

\section{Introduction}

In recent years, multi-agent systems have attracted significant attention across several scientific disciplines, including robotics, economics, control theory, mathematical biology, and social sciences \cite{Bullo2009, Camazine2001, Jackson2008, Jadbabaie2003, OlfatiSaber2007}. A fundamental aspect of multi-agent dynamics is understanding how simple self-organizing interaction rules lead to the emergence of globally coordinated collective behaviors, such as opinion consensus in human societies and flocking or schooling in biological swarms \cite{Castellano2009, Vicsek1995}. To describe continuous opinion formation and decision-making processes in social networks, Hegselmann and Krause \cite{Hegselmann2002} proposed the celebrated Hegselmann--Krause (HK) bounded confidence model:
\begin{equation}\label{eq:hk_classic}
\dot{\boldsymbol{x}}_i(t) = \frac{1}{N} \sum_{k=1}^{N} a_{ik}(t)\bigl(\boldsymbol{x}_k(t) - \boldsymbol{x}_i(t)\bigr), \qquad i \in [N] := \{1, \ldots, N\},
\end{equation}
where $\boldsymbol{x}_i(t) \in \mathbb{R}^n$ represents the continuous opinion state of the $i$-th agent at time $t$, and $a_{ik}(t) = \mathbf{1}_{\{\|\boldsymbol{x}_i(t) - \boldsymbol{x}_k(t)\| \le \epsilon\}}$ denotes the state-dependent interaction weight bounded by a confidence threshold $\epsilon > 0$. In this framework, agents adjust their opinions selectively by averaging over neighbors whose opinions lie within their confidence range \cite{Canuto2012, Jabin2014}. Several  recent extensions consider always positive interactions and more general interaction coefficients \cite{Choi2021, Haskovec2021}.

On the other hand, to model second-order swarming phenomena and velocity alignment in mobile physical or biological groups, Cucker and Smale \cite{Cucker2007} established the canonical flocking paradigm, which was subsequently generalized to second-order velocity alignment dynamics governed by abstract communication functions:
\begin{equation}\label{eq:cs_classic}
\begin{cases}
\dot{\boldsymbol{x}}_i(t) = \boldsymbol{v}_i(t), \\[4pt]
\dot{\boldsymbol{v}}_i(t) = \displaystyle\frac{1}{N} \sum_{k=1}^{N} \psi(\|\boldsymbol{x}_i(t) - \boldsymbol{x}_k(t)\|)\bigl(\boldsymbol{v}_k(t) - \boldsymbol{v}_i(t)\bigr), \qquad i \in [N],
\end{cases}
\end{equation}
where $\boldsymbol{x}_i(t) \in \mathbb{R}^n$ and $\boldsymbol{v}_i(t) \in \mathbb{R}^n$ represent the spatial position and velocity vectors of agent $i$, respectively. Here, the communication function $\psi : \mathbb{R}_+ \to \mathbb{R}_+$ is an abstract, non-increasing, strictly positive function quantifying the distance-dependent interaction strength between agents \cite{Ha2009}. Both the HK opinion framework \eqref{eq:hk_classic} and second-order velocity alignment kinematics \eqref{eq:cs_classic} have been extensively investigated regarding asymptotic consensus and cluster formation \cite{Choi2021, Jabin2014}, velocity flocking across fixed and switching topologies \cite{Dong, Fanqin}, as well as their kinetic descriptions and mean-field limits \cite{Carrillo2010, Ha2009, Ha2}.

Despite the analytical success of classical HK and CS models, standard formulations rely heavily on local-in-time interactions, assuming that agent state adjustments depend strictly on instantaneous state evaluations. In practical, social, biological, and technological networks, information transmission and cognitive processing naturally involve non-negligible time lags and cognitive inertia \cite{Chiara, Choi2021, Continelli, Continelli2023, Du2025, Haskovec2021, Pignotti2018, Cartabia}. To account for temporal delay effects, several delayed multi-agent models have been proposed in the literature:
\begin{equation*}\label{eq:hk_delay}
\dot{\boldsymbol{x}}_i(t) = \sum_{k=1}^{N} a_{ik}(t)\bigl(\boldsymbol{x}_k(t - \tau(t)) - \boldsymbol{x}_i(t)\bigr),
\end{equation*}
where $\tau(t) \ge 0$ represents a pointwise or time-varying communication delay \cite{Choi2021, Continelli2023}.

However, pointwise time delays consider information from only a single past instant $t - \tau(t)$. In practical multi-agent networks, decision-making and collective interactions naturally rely on the continuous accumulation of historical experience \cite{Jiang2025, Liu2023}. In social dynamics, the history of mutual agreement or disagreement fundamentally shapes how individuals evaluate information, giving rise to an emergent collective memory \cite{Boschi2021}. Recent studies also demonstrate that individual memory capacities quantitatively govern opinion propagation and consensus formation in complex networks \cite{Liu2026}. Beyond opinion dynamics, historical memory has recently become an essential mechanism in artificial intelligence, where agents utilize workflow memory abstracted from past action trajectories to guide complex task execution \cite{wang2025agent}.

Mathematically, hereditary memory effects spanning an infinite history are classically modeled using Volterra-type integro-differential equations in viscoelasticity, heat conduction, and continuum physics (see, e.g., \cite{Chepyzhov2006, Conti2020, Dafermos1970, Giorgi2001}). In multi-agent synchronization, memory effects have recently been incorporated into phase oscillator ensembles such as the Kuramoto model \cite{Cho2025}. However, existing synchronization frameworks restrict the memory integral to the finite interval $[0, t].$ We refer to \cite{Hask_memory} for recent results about clustering in spontaneous particle aggregation with finite memory. To the best of our knowledge, the emergent dynamics of multi-agent systems subject to infinite distributed fading memory over $(-\infty, t]$ remain largely unexplored.

Inspired by Volterra's hereditary principles \cite{Volterra1930} and fading memory theories \cite{Conti2020, Dafermos1970, Giorgi2001}, this paper establishes a unified theoretical framework for multi-agent opinion consensus and second-order velocity alignment flocking with infinite fading memory. Specifically, we investigate two fundamental classes of systems: first-order opinion dynamics and second-order velocity alignment dynamics. For first-order opinion dynamics, historical relative states are accumulated through a symmetric memory kernel $g_{ik}(s)$. We formulate the following two first-order models:
\vspace{0.2cm}

\noindent\textbf{Model I (First-Order Pure Memory):}
\begin{equation}\label{eq:model1}
\left\{
\begin{aligned}
&\dot{\boldsymbol{x}}_i(t) = \sum_{k=1}^{N} \int_{0}^{\infty} g_{ik}(s)\bigl(\boldsymbol{x}_k(t-s)-\boldsymbol{x}_i(t-s)\bigr)\,ds,~~ i \in [N],\\
&\boldsymbol{x}_i(t)=\boldsymbol{x}^{in}_i(t), \quad t\in(-\infty,0].
\end{aligned}
\right.
\end{equation}
Note that the memory integral covers the entire infinite past. Therefore, the global well-posedness of the integro-differential equations inherently necessitates the initial history to be well-defined over $(-\infty, 0]$. Here, the initial data $\x_i^{in}(t),~\forall i\in[N]$ are assumed to be bounded and continuous functions on $(-\infty, 0]$.

While Model I captures dynamics driven purely by historical accumulation, practical multi-agent interactions often involve a combination of cognitive memory and immediate environmental stimuli. To account for the coexistence of both mechanisms, we incorporate instantaneous interactions, leading to:
\vspace{0.2cm}

\noindent\textbf{Model II (First-Order Instantaneous Interaction + Memory):}
{\small
\begin{equation}\label{eq:model2}
\left\{
\begin{aligned}
&\dot{\boldsymbol{x}}_i(t) = \sum_{k=1}^{N} \psi(\|\boldsymbol{x}_i(t)-\boldsymbol{x}_k(t)\|) \bigl(\boldsymbol{x}_k(t)-\boldsymbol{x}_i(t)\bigr) + \sum_{k=1}^{N}\int_{0}^{\infty} g_{ik}(s) \bigl(\boldsymbol{x}_k(t-s)-\boldsymbol{x}_i(t-s)\bigr)\,ds,\\
&\boldsymbol{x}_i(t)=\boldsymbol{x}^{in}_i(t),~~t\in(-\infty,0],
\end{aligned}
\right.
\end{equation}
}
where $\psi : \mathbb{R}_+ \to \mathbb{R}_+$ denotes a continuous, non-increasing, strictly positive communication function. The initial history $\x_i^{in}(t),~\forall i\in[N]$ satisfies the same boundedness and continuity conditions as in Model I.

Beyond first-order opinion dynamics, we extend our framework to second-order velocity alignment kinematics ($\dot{\boldsymbol{x}}_i(t) = \boldsymbol{v}_i(t)$), where the acceleration is driven by relative velocity interactions. Similar to the first-order case, we first investigate a purely memory-driven velocity alignment model:
\vspace{0.2cm}

\noindent\textbf{Model III (Second-Order Pure Memory):}
\begin{equation}\label{eq:model3}
\left\{
\begin{aligned}
&\dot{\boldsymbol{x}}_i(t) = \boldsymbol{v}_i(t), \\
&\dot{\boldsymbol{v}}_i(t) = \displaystyle\sum_{k=1}^{N} \int_{0}^{\infty} g_{ik}(s)\bigl(\boldsymbol{v}_k(t-s)-\boldsymbol{v}_i(t-s)\bigr)\,ds,~~i \in [N].\\
\end{aligned}
\right.
\end{equation}
Because the infinite memory integral exclusively acts on the velocity components, a bounded and continuous initial history is only required for the velocities, while the spatial positions necessitate only point initial values. The initial conditions are thus specified as:
\begin{equation*}\label{initial-eq-model3}
\left\{
\begin{aligned}
&\boldsymbol{x}_{i}(0) = \boldsymbol{x}_{i}^{0},\\
&\boldsymbol{v}_i(t)=\boldsymbol{v}^{in}_i(t), \quad t\in(-\infty,0].
\end{aligned}
\right.
\end{equation*}

Finally, incorporating classic state-dependent instantaneous communication yields the full mixed-interaction flocking model:
\vspace{0.2cm}

\noindent\textbf{Model IV (Second-Order Instantaneous Interaction + Memory):}
{\small
\begin{equation}\label{eq:model4}
\begin{cases}
\dot{\boldsymbol{x}}_i(t) = \boldsymbol{v}_i(t), \\[4pt]
\dot{\boldsymbol{v}}_i(t) = \displaystyle\sum_{k=1}^{N} \psi(\|\boldsymbol{x}_i(t)-\boldsymbol{x}_k(t)\|) \bigl(\boldsymbol{v}_k(t)-\boldsymbol{v}_i(t)\bigr) + \sum_{k=1}^{N} \int_{0}^{\infty} g_{ik}(s)\bigl(\boldsymbol{v}_k(t-s)-\boldsymbol{v}_i(t-s)\bigr)\,ds.
\end{cases}
\end{equation}
}
The initial conditions for Model IV identically mirror those of Model III, requiring $\boldsymbol{x}_{i}(0) = \boldsymbol{x}_{i}^{0}$ and a bounded continuous history $\boldsymbol{v}_i(t)=\boldsymbol{v}^{in}_i(t)$ for $t\in(-\infty,0]$.

A central theoretical goal of this paper is to find a general framework ensuring   multi-agent systems driven by Volterra fading memory exhibit global exponential state consensus or velocity flocking.

The mathematical analysis of systems \eqref{eq:model1}--\eqref{eq:model4} presents major analytical challenges. Because the interactions involve memory integrals extending over $(0, \infty),$ standard Gr\"onwall-type estimates or energy methods on finite-dimensional phase spaces \cite{Ha2009, Cartabia} cannot be directly applied, as the state derivative depends explicitly on the entire historical trajectory. Furthermore, in pure memory dynamics (Models I and III) where instantaneous coupling is absent ($\psi \equiv 0$), standard kinetic energy derivatives lack state or velocity dissipation terms, making it difficult to establish asymptotic convergence. To overcome these technical obstacles, we establish a novel analytical approach based on the following key ingredients:
\begin{enumerate}
    \item [(i)] \textbf{Dafermos History Variable Transformation:} Inspired by the methodology introduced by Dafermos in viscoelasticity \cite{Dafermos1970}, we define the position and velocity past-history variables $$\boldsymbol{\eta}_i^t(s) := \int_{t-s}^t \boldsymbol{x}_i(r)\,dr, \quad \boldsymbol{\xi}_i^t(s) := \int_{t-s}^t \boldsymbol{v}_i(r)\,dr.$$
     By performing integration by parts on the Volterra memory integrals, the non-autonomous integro-differential models are transformed into autonomous dynamical systems on extended Hilbert spaces governed by linear transport operators $\partial_t \boldsymbol{\eta}_i^t(s) = \boldsymbol{x}_i(t) - \partial_s \boldsymbol{\eta}_i^t(s)$ and $\partial_t \boldsymbol{\xi}_i^t(s) = \boldsymbol{v}_i(t) - \partial_s \boldsymbol{\xi}_i^t(s)$.
    \item [(ii)] \textbf{Auxiliary Multipliers and Hidden Dissipation:} To address the lack of instantaneous state dissipation in Models I and III, we construct auxiliary multiplier functionals ($\Phi(t)$ for first-order and $\Phi_v(t)$ for second-order). By differentiating these multipliers along trajectories and utilizing Cauchy--Schwarz and Young inequalities, we extract the "hidden dissipation" generated by the kernel derivative $\mu'_{ik}(s) \le -\delta \mu_{ik}(s)$, where $\mu_{ik}(s) = -g'_{ik}(s)$. Combining these multipliers with the standard Dafermos energy, we construct strictly coercive Lyapunov functionals $\mathcal{E}(t)$ and $\mathcal{E}_v(t)$ that prove unconditional global exponential convergence.
    \item [(iii)] \textbf{Lyapunov Position-Integral Functional for Flocking Dynamics:} For Model IV, the position-dependent communication weight $\psi(\|\boldsymbol{x}_i(t)-\boldsymbol{x}_k(t)\|)$ naturally decays as the spatial distance expands, which threatens to diminish the instantaneous alignment dissipation. To overcome this critical issue, instead of employing traditional bootstrap arguments, we introduce the running maximum of the spatial diameter $Y(t) := \max\limits_{s \in [0,t]} X_{\max}(s)$ and construct a novel, non-increasing Lyapunov position-integral functional $L(t)$. This functional seamlessly couples the square root of the velocity energy $\sqrt{E_v(t)}$ with the spatial capacity integral evaluated at $Y(t)$. By calculating the Dini derivatives, we establish $D^+ L(t) \le 0$. Under an explicit integral capacity condition on the initial data, this strict monotonicity inherently confines the spatial diameter to a uniform finite bound $R_*$ for all times, which guarantees exponential velocity alignment.
\end{enumerate}

The rest of this paper is organized as follows. Section \ref{sec:pre} presents preliminaries, fundamental assumptions on memory kernels and communication functions, formal mathematical definitions of state consensus and velocity flocking, and detailed proofs of the Dafermos history transformations. Section \ref{sec:main_result} summarizes all main theoretical results regarding well-posedness, exponential consensus, and flocking. In Section \ref{sec:proof_wp}, we provide the complete proof of global well-posedness for both first-order and second-order models. Section \ref{sec:proof_1st} contains the detailed proofs of unconditional global exponential consensus for Models I and II. In Section \ref{sec:proof_2nd}, we present the proofs of conditional and unconditional exponential flocking for Models IV and III. Finally, we draw conclusions in Section \ref{sec:conclusion}.

\section{Preliminaries}\label{sec:pre}

\noindent\textbf{Notation.} In this paper, for column vectors in $\mathbb{R}^n$, we adopt the standard Euclidean norm denoted by $\|\cdot\|$. We set $[N] := \{1, \ldots, N\}$. The ensemble position vector and velocity vector are denoted by $\boldsymbol{x} := (\boldsymbol{x}_1, \ldots, \boldsymbol{x}_N) \in (\mathbb{R}^n)^N$ and $\boldsymbol{v} := (\boldsymbol{v}_1, \ldots, \boldsymbol{v}_N) \in (\mathbb{R}^n)^N$, respectively. We define the maximum spatial diameter and maximum velocity diameter as
\begin{equation*}
	X_{\max}(t) := \max_{i,k\in[N]} \|\boldsymbol{x}_i(t) - \boldsymbol{x}_k(t)\|, \qquad V_{\max}(t) := \max_{i,k\in[N]} \|\boldsymbol{v}_i(t) - \boldsymbol{v}_k(t)\|.
\end{equation*}
Moreover, we denote $\mathbb{R}_+:=[0, +\infty).$ Finally, for all $t\in(-\infty,0],$ we denote the initial data by $\x^{in}(t):=\left(\x^{in}_1(t),\x^{in}_2(t),...,\x^{in}_N(t)\right),~\boldsymbol{v}^{in}(t):=\left(\boldsymbol{v}^{in}_1(t),\boldsymbol{v}^{in}_2(t),...,\boldsymbol{v}^{in}_N(t)\right).$ Especially, for $t=0,$ we further denote
$$
\x^{0}:=\left(\x^{0}_1,\x^{0}_2,...,\x^{0}_N\right):=\left(\x^{in}_1(0),\x^{in}_2(0),...,\x^{in}_N(0)\right).
$$

Now, we present the fundamental standing assumptions on the communication weight functions and memory kernels, formulate the precise mathematical definitions of consensus and flocking, and establish the Dafermos past-history transformations.

\begin{assumption}[Communication Weights]\label{assum:psi}
{\rm
The state-dependent communication function $\psi : \mathbb{R}_+ \to \mathbb{R}_+$ is globally Lipschitz continuous, non-increasing, and strictly positive.
}
\end{assumption}

\begin{assumption}[Fading Memory Kernel]\label{assum:g}
{\rm
For each pair $i,k \in [N]$, the memory kernel $g_{ik} : \mathbb{R}_+ \to \mathbb{R}_+$ satisfies:
\begin{enumerate}
  \item [\rm (i)] \textbf{Symmetry and Positivity:} For all $s \ge 0,$
  $$
  g_{ik}(s) = g_{ki}(s) \ge 0.
  $$
  \item [\rm (ii)] \textbf{Regularity and Decay:} $g_{ik}$ is twice continuously differentiable and non-increasing, with $g_{ik}(s) \rightarrow  0$ as $s\rightarrow \infty.$ Moreover, defining for all $s\ge0,$
  \begin{equation*}\label{eq:mu_def}
  \mu_{ik}(s) := -g_{ik}'(s) \ge 0,
  \end{equation*}
  there exists a uniform constant $\delta > 0$ such that for all $s\ge0,$
  \begin{equation}\label{eq:g_decay}
      \mu_{ik}'(s) + \delta\, \mu_{ik}(s) \le 0.
  \end{equation}
\end{enumerate}
}
\end{assumption}

\begin{remark}\label{rem:kernel_implication}
{\rm
By the Fundamental Theorem of Calculus and the boundary condition $g_{ik}(s) \rightarrow  0$ as $s\rightarrow \infty$, the memory kernel $g_{ik}$ is recovered as the tail integral of $\mu_{ik}$:
\begin{equation*}\label{eq:g_as_tail_of_mu}
g_{ik}(s) = \int_s^\infty \big(-g_{ik}'(\tau)\big)\,d\tau = \int_s^\infty \mu_{ik}(\tau)\,d\tau, \qquad \forall s \ge 0.
\end{equation*}
Integrating \eqref{eq:g_decay} over $[s,\infty)$ yields $-\mu_{ik}(s) + \delta\, g_{ik}(s) \le 0$, which implies the domination property
\begin{equation*} \label{eq:mu_domination}
g_{ik}(s) \le \frac{1}{\delta}\, \mu_{ik}(s), \qquad \forall s\ge0.
\end{equation*}
Gr\"onwall's inequality applied to \eqref{eq:g_decay} guarantees $\mu_{ik}(s)\le \mu_{ik}(0)e^{-\delta s}$, and hence $g_{ik}(s)\le \frac{\mu_{ik}(0)}{\delta}e^{-\delta s}$. We define the maximum total mass parameter of the original kernels by
\begin{equation*}
\kappa_{\max} := \max_{i,k\in[N]} \int_0^\infty g_{ik}(s)\,ds < \infty,
\end{equation*}
and the mass parameters of the auxiliary tail density $\mu_{ik}$ by
$$
\lambda_{ik} := \int_0^\infty \mu_{ik}(s)\,ds = g_{ik}(0) > 0, \quad \lambda_{\min} := \min_{i,k\in[N]}\lambda_{ik},
$$
$$
\lambda_{\max} := \max_{i,k\in[N]}\lambda_{ik}, \quad \mu^0_{\max} := \max_{i,k\in[N]} \mu_{ik}(0) < \infty.
$$
}
\end{remark}

Next, we introduce the formal mathematical definitions of asymptotic consensus and flocking for the multi-agent systems under study.

\begin{definition}\label{def:consensus}
{\rm
A solution $\boldsymbol{x}(t) = (\boldsymbol{x}_1(t), \ldots, \boldsymbol{x}_N(t))$ to the first-order system \eqref{eq:model1} or \eqref{eq:model2} is said to achieve \textbf{asymptotic  consensus} if the maximum spatial diameter converges to zero asymptotically:
\begin{equation*}
\lim_{t \to \infty} X_{\max}(t) = \lim_{t \to \infty} \max_{i,k\in[N]} \|\boldsymbol{x}_i(t) - \boldsymbol{x}_k(t)\| = 0.
\end{equation*}
If there exist positive constants $C_0 > 0$ and $\gamma > 0$ such that for all $t \ge 0,$
$$
X_{\max}(t) \le C_0 e^{-\gamma t},
$$
then the system achieves \textbf{exponential consensus}.
}
\end{definition}

\begin{definition}\label{def:flocking}
{\rm
A solution $(\boldsymbol{x}(t), \boldsymbol{v}(t)) = (\boldsymbol{x}_1(t), \ldots, \boldsymbol{x}_N(t), \boldsymbol{v}_1(t), \ldots, \boldsymbol{v}_N(t))$ to the second-order system \eqref{eq:model3} or \eqref{eq:model4} is said to achieve \textbf{asymptotic flocking} if the following two conditions are satisfied simultaneously:
\begin{enumerate}
    \item [\rm (i)] \textbf{Velocity Alignment:} The maximum velocity diameter decays to zero asymptotically, i.e.,
    \begin{equation*}
    	\lim_{t \to \infty}
    V_{\max}(t) = 	\lim_{t \to \infty}\max_{i,k\in[N]} \|\boldsymbol{v}_i(t) - \boldsymbol{v}_k(t)\| =0.
    \end{equation*}
    \item [\rm (ii)] \textbf{Uniform Spatial Boundedness:} The maximum spatial diameter remains uniformly bounded for all times:
    \begin{equation*}
    \sup_{t \ge 0} X_{\max}(t) = \sup_{t \ge 0} \max_{i,k\in[N]} \|\boldsymbol{x}_i(t) - \boldsymbol{x}_k(t)\| < \infty.
    \end{equation*}
\end{enumerate}
}
\end{definition}
If the maximum velocity diameter decays to zero exponentially fast,
then we say the system achieves \textbf{exponential flocking}.

Finally, we can rewrite  the integro-differential equations via the Dafermos past-history variable reformulations (see \cite{Dafermos1970}).

\begin{lemma}[First-Order History Transformation]\label{lem:uncoupled_transform_1st}
For $i \in [N]$ and $t \ge 0$, define the position past-history variable:
\begin{equation} \label{eq:eta_uncoupled}
\boldsymbol{\eta}_i^t(s) := \int_{t-s}^{t} \boldsymbol{x}_i(r)\,dr, \qquad s \ge 0.
\end{equation}
Then $\boldsymbol{\eta}_i^t(0) = \mathbf{0}$, $\partial_s \boldsymbol{\eta}_i^t(s) = \boldsymbol{x}_i(t-s)$, and the core transport identity holds strictly:
\begin{equation}\label{eq:transport_uncoupled}
\partial_t \boldsymbol{\eta}_i^t(s) = \boldsymbol{x}_i(t) - \boldsymbol{x}_i(t-s) = \boldsymbol{x}_i(t) - \partial_s \boldsymbol{\eta}_i^t(s).
\end{equation}
Furthermore, the position memory integral transforms into:
\begin{equation*}\label{eq:ibp_uncoupled}
\int_{0}^{\infty} g_{ik}(s)\bigl(\boldsymbol{x}_k(t-s)-\boldsymbol{x}_i(t-s)\bigr)\,ds = \int_0^\infty \mu_{ik}(s) \bigl(\boldsymbol{\eta}_k^t(s) - \boldsymbol{\eta}_i^t(s)\bigr)\,ds.
\end{equation*}
\end{lemma}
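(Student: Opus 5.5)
The first three identities follow from the fundamental theorem of calculus, since $\boldsymbol{x}_i$ is continuous on $(-\infty,t]$. The initial history is continuous and bounded, and the solution is continuous (indeed $C^1$) for $t\ge0$. First, $\boldsymbol{\eta}_i^t(0)=\int_t^t\boldsymbol{x}_i(r)\,dr=\mathbf 0$. Differentiating \eqref{eq:eta_uncoupled} in $s$ only moves the lower limit $t-s$, so $\partial_s\boldsymbol{\eta}_i^t(s)=\boldsymbol{x}_i(t-s)$. Differentiating in $t$ moves both limits, so $\partial_t\boldsymbol{\eta}_i^t(s)=\boldsymbol{x}_i(t)-\boldsymbol{x}_i(t-s)$. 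Substituting the $s$-derivative then gives \eqref{eq:transport_uncoupled}.

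For the memory integral, I set $\boldsymbol{w}^t(s):=\boldsymbol{\eta}_k^t(s)-\boldsymbol{\eta}_i^t(s)$, so that $\partial_s\boldsymbol{w}^t(s)=\boldsymbol{x}_k(t-s)-\boldsymbol{x}_i(t-s)$. I integrate by parts on a finite interval $[0,S]$:
\begin{equation*}
\int_0^S g_{ik}(s)\,\partial_s\boldsymbol{w}^t(s)\,ds=g_{ik}(S)\boldsymbol{w}^t(S)-g_{ik}(0)\boldsymbol{w}^t(0)+\int_0^S\mu_{ik}(s)\boldsymbol{w}^t(s)\,ds,
\end{equation*}
using $-g_{ik}'=\mu_{ik}$. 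The term at $s=0$ vanishes because $\boldsymbol{w}^t(0)=\mathbf 0$. It remains to send $S\to\infty$.

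\textbf{Main step: the boundary term and convergence of both integrals.} Let $M_t:=\sup_{r\le t}\max_j\|\boldsymbol{x}_j(r)\|$. This is finite because the history is bounded and the solution is continuous on $[0,t]$. Then $\|\boldsymbol{w}^t(S)\|\le 2M_tS$. By Remark \ref{rem:kernel_implication}, $g_{ik}(S)\le\frac{\mu_{ik}(0)}{\delta}e^{-\delta S}$, so $g_{ik}(S)\boldsymbol{w}^t(S)\to\mathbf 0$ as $S\to\infty$. The same remark gives $\mu_{ik}(s)\le\mu_{ik}(0)e^{-\delta s}$, so $\int_0^\infty\mu_{ik}(s)\|\boldsymbol{w}^t(s)\|\,ds\le 2M_t\mu_{ik}(0)\int_0^\infty se^{-\delta s}\,ds<\infty$. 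Likewise, $g_{ik}\in L^1$ and the integrand $\partial_s\boldsymbol{w}^t$ is bounded, so the left-hand integral converges absolutely. Letting $S\to\infty$ therefore yields the claimed identity.

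The only delicate point is controlling the linear growth of $\boldsymbol{\eta}^t$ in $s$ against the kernel decay. The exponential bound from \eqref{eq:g_decay} settles this.
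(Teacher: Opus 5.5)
Your proof is correct and follows the same route as the paper. You use the fundamental theorem of calculus for $\boldsymbol{\eta}_i^t(0)=\mathbf{0}$ and the two derivative identities, then integrate by parts in $s$: the boundary term at $s=0$ vanishes by $\boldsymbol{\eta}_i^t(0)=\mathbf{0}$, and the term at infinity vanishes by the exponential decay of $g_{ik}$. Where the paper is implicit, you spell out the truncation to $[0,S]$, the balance of the linear growth $\|\boldsymbol{w}^t(S)\|\le 2M_tS$ against $g_{ik}(S)\le \frac{\mu_{ik}(0)}{\delta}e^{-\delta S}$, and the absolute convergence of both integrals.
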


\begin{proof}
Identity \eqref{eq:transport_uncoupled} follows directly by differentiating \eqref{eq:eta_uncoupled} with respect to $t$. Applying integration by parts in $s,$ using $\boldsymbol{\eta}_i^t(0)=\mathbf{0}$ and $g_{ik}(s) \rightarrow  0$ as $s\rightarrow \infty$, together with the exponential decay of $g_{ik}(s)$, we have:
{\small
\begin{align*}
\int_{0}^{\infty} g_{ik}(s)\bigl(\boldsymbol{x}_k(t-s)-\boldsymbol{x}_i(t-s)\bigr)\,ds &= \int_0^\infty g_{ik}(s)\, \partial_s \bigl(\boldsymbol{\eta}_k^t(s) - \boldsymbol{\eta}_i^t(s)\bigr)\, ds \\
&= \Big[g_{ik}(s)\big(\boldsymbol{\eta}_k^t(s)-\boldsymbol{\eta}_i^t(s)\big)\Big]_0^\infty - \int_0^\infty g_{ik}'(s)\big(\boldsymbol{\eta}_k^t(s) - \boldsymbol{\eta}_i^t(s)\big)\,ds \\
&= \mathbf{0} + \int_0^\infty \big(-g_{ik}'(s)\big)\big(\boldsymbol{\eta}_k^t(s) - \boldsymbol{\eta}_i^t(s)\big)\,ds \\
&= \int_0^\infty \mu_{ik}(s) \bigl(\boldsymbol{\eta}_k^t(s) - \boldsymbol{\eta}_i^t(s)\bigr)\, ds,
\end{align*}
}
which completes the proof.
\end{proof}

\begin{lemma}[Second-Order History Transformation]\label{lem:uncoupled_transform_2nd}
For $i \in [N]$ and $t \ge 0$, define the velocity past-history variable:
\begin{equation} \label{eq:xi_def}
\boldsymbol{\xi}_i^t(s) := \int_{t-s}^{t} \boldsymbol{v}_i(r)\,dr, \qquad s \ge 0.
\end{equation}
Then $\boldsymbol{\xi}_i^t(0) = \mathbf{0}$, $\partial_s \boldsymbol{\xi}_i^t(s) = \boldsymbol{v}_i(t-s)$, and the transport identity holds:
\begin{equation}\label{eq:transport_xi}
\partial_t \boldsymbol{\xi}_i^t(s) = \boldsymbol{v}_i(t) - \boldsymbol{v}_i(t-s) = \boldsymbol{v}_i(t) - \partial_s \boldsymbol{\xi}_i^t(s).
\end{equation}
Furthermore, the uncoupled velocity memory integral transforms into:
\begin{equation}\label{eq:ibp_velocity}
\int_{0}^{\infty} g_{ik}(s)\bigl(\boldsymbol{v}_k(t-s)-\boldsymbol{v}_i(t-s)\bigr)\,ds = \int_0^\infty \mu_{ik}(s)\bigl(\boldsymbol{\xi}_k^t(s)-\boldsymbol{\xi}_i^t(s)\bigr)\,ds.
\end{equation}
\end{lemma}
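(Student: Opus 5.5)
The argument mirrors the proof of Lemma \ref{lem:uncoupled_transform_1st}, with $\boldsymbol{x}_i$ replaced by $\boldsymbol{v}_i$ and $\boldsymbol{\eta}_i^t$ by $\boldsymbol{\xi}_i^t$.

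\textbf{Elementary identities.} First, $\boldsymbol{\xi}_i^t(0)=\mathbf{0}$ because the integral in \eqref{eq:xi_def} is over a degenerate interval. Since $\boldsymbol{v}_i$ is continuous on $(-\infty,t]$ (the initial history $\boldsymbol{v}_i^{in}$ is continuous, and the solution continues it continuously through $t=0$), the fundamental theorem of calculus applies in the lower limit. This gives $\partial_s\boldsymbol{\xi}_i^t(s)=\boldsymbol{v}_i(t-s)$. Differentiating \eqref{eq:xi_def} in $t$ involves both the upper and the lower limit, and yields $\partial_t\boldsymbol{\xi}_i^t(s)=\boldsymbol{v}_i(t)-\boldsymbol{v}_i(t-s)$. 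Combining the two gives \eqref{eq:transport_xi}.

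\textbf{Memory identity.} For \eqref{eq:ibp_velocity}, write $\boldsymbol{v}_k(t-s)-\boldsymbol{v}_i(t-s)=\partial_s\bigl(\boldsymbol{\xi}_k^t(s)-\boldsymbol{\xi}_i^t(s)\bigr)$. Then integrate by parts on a truncated interval $[0,S]$ and let $S\to\infty$:
\[
\int_0^S g_{ik}(s)\,\partial_s\bigl(\boldsymbol{\xi}_k^t-\boldsymbol{\xi}_i^t\bigr)\,ds
= g_{ik}(S)\bigl(\boldsymbol{\xi}_k^t(S)-\boldsymbol{\xi}_i^t(S)\bigr)+\int_0^S \mu_{ik}(s)\bigl(\boldsymbol{\xi}_k^t(s)-\boldsymbol{\xi}_i^t(s)\bigr)\,ds .
\]
The boundary term at $s=0$ vanishes because $\boldsymbol{\xi}^t(0)=\mathbf{0}$.

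\textbf{Passage to the limit.} This is the only step needing care. We use that $\boldsymbol{v}_i$ is bounded on $(-\infty,t]$: the history is bounded, and on $[0,t]$ the solution is continuous. Hence there is $M_t$ with $\sup_{r\le t}\|\boldsymbol{v}_i(r)\|\le M_t$, and so $\|\boldsymbol{\xi}_i^t(s)\|\le M_t s$, i.e.\ the history variable grows at most linearly in $s$.

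By Remark \ref{rem:kernel_implication}, $g_{ik}(S)\le \frac{\mu_{ik}(0)}{\delta}e^{-\delta S}$. Therefore the boundary term is bounded by $2M_t S\,\frac{\mu_{ik}(0)}{\delta}e^{-\delta S}$, which tends to $0$ as $S\to\infty$.

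For the integrals, $\mu_{ik}(s)\le\mu_{ik}(0)e^{-\delta s}$, so $\mu_{ik}(s)\,\|\boldsymbol{\xi}_k^t(s)-\boldsymbol{\xi}_i^t(s)\|\le 2M_t\mu_{ik}(0)\,s e^{-\delta s}$. This is integrable on $(0,\infty)$, so the right-hand integral converges absolutely as $S\to\infty$. The left-hand side also converges absolutely, since $g_{ik}$ is integrable and the velocities are bounded. Letting $S\to\infty$ in the truncated identity then yields \eqref{eq:ibp_velocity}.
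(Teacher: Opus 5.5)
Your proof is correct and follows the paper's route. The identities come from the fundamental theorem of calculus, and \eqref{eq:ibp_velocity} from integration by parts in $s$ using $\boldsymbol{\xi}_i^t(0)=\mathbf{0}$ and the exponential decay of $g_{ik}$; your truncation to $[0,S]$ with the linear growth bound $\|\boldsymbol{\xi}_i^t(s)\|\le M_t s$ just spells out why the boundary term at infinity vanishes, which the paper states without detail.
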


\begin{proof}
Differentiating \eqref{eq:xi_def} with respect to $t$ gives \eqref{eq:transport_xi}. Applying integration by parts in $s$ analogously to Lemma \ref{lem:uncoupled_transform_1st} gives \eqref{eq:ibp_velocity}.
\end{proof}

\section{Description of Main Results}\label{sec:main_result}
In this section, we summarize our main theoretical results concerning the global well-posedness, complete state consensus, and asymptotic flocking for the multi-agent systems \eqref{eq:model1}--\eqref{eq:model4} with infinite fading memory. Precisely, we divide our results into three parts: the global existence and uniqueness of classical solutions, the exponential consensus of first-order opinion dynamics, and the asymptotic flocking of second-order velocity alignment kinematics. The complete proofs are presented in detail in Sections \ref{sec:proof_wp}--\ref{sec:proof_2nd}.

\subsection{Global Well-Posedness}
We first address the global-in-time solvability of the Cauchy problems associated with Models I--II. For first-order systems \eqref{eq:model1} and \eqref{eq:model2}, the state is prescribed by a bounded continuous initial history $\boldsymbol{\phi} \in L^\infty(-\infty, 0]; (\mathbb{R}^n)^N)$. By establishing local contraction mappings and combining them with uniform a priori Gr\"onwall estimates, we obtain the following global well-posedness theorem.

\begin{theorem}\label{thm:wellposed}
For any given bounded continuous initial history $\boldsymbol{x}^{in} \in L^\infty(-\infty, 0]; (\mathbb{R}^n)^N)$ for first-order models, there exists a unique global classical solution $\boldsymbol{x} \in C^1([0,\infty); (\mathbb{R}^n)^N)$ to Models I and II.
\end{theorem}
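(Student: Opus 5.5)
We treat Models I and II as a functional differential equation $\dot{\boldsymbol{x}}(t)=F(\boldsymbol{x}(t))+G(\boldsymbol{x}_t)$, where $\boldsymbol{x}_t(s):=\boldsymbol{x}(t-s)$, $s\ge0$, is the history segment. The instantaneous part is
\[
F_i(\boldsymbol{x})=\sum_{k}\psi(\|\boldsymbol{x}_i-\boldsymbol{x}_k\|)(\boldsymbol{x}_k-\boldsymbol{x}_i),
\]
with $F\equiv0$ for Model I. The memory part is
\[
G_i(\boldsymbol{x}_t)=\sum_k\int_0^\infty g_{ik}(s)\bigl(\boldsymbol{x}_k(t-s)-\boldsymbol{x}_i(t-s)\bigr)\,ds .
\]
We split $G$ as $\int_0^t+\int_t^\infty$. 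The tail $\int_t^\infty$ involves only the prescribed history $\boldsymbol{x}^{in}$. Since $\boldsymbol{x}^{in}$ is bounded and continuous and $g_{ik}$ is integrable (Remark \ref{rem:kernel_implication}: $g_{ik}(s)\le \frac{\mu_{ik}(0)}{\delta}e^{-\delta s}$), the tail is a known continuous forcing term $\boldsymbol{h}(t)$, bounded by $2N\kappa_{\max}\|\boldsymbol{x}^{in}\|_\infty$. Continuity in $t$ follows by dominated convergence after the change of variable $r=t-s$. The problem then reduces to the Volterra integral equation
\[
\boldsymbol{x}(t)=\boldsymbol{x}^0+\int_0^t\Big[F(\boldsymbol{x}(\tau))+\sum_k\int_0^\tau g_{ik}(\tau-r)\bigl(\boldsymbol{x}_k(r)-\boldsymbol{x}_i(r)\bigr)dr+\boldsymbol{h}(\tau)\Big]d\tau .
\]

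\textbf{Local existence and uniqueness.} We work in $X_T=C([0,T];(\mathbb{R}^n)^N)$ with the sup norm, on the closed ball of radius $R$ around the constant function $\boldsymbol{x}^0$, and apply Banach's fixed point theorem to the right-hand side map $\mathcal{T}$. The term $F$ is not globally Lipschitz: $\psi(\|z\|)z$ has local Lipschitz constant $\psi(0)+\mathrm{Lip}(\psi)\cdot 2R'$ on bounded sets, using Assumption \ref{assum:psi}. The memory convolution is linear, with Lipschitz constant at most $2N\,g_{\max}(0)\,T$, where $g_{\max}(0):=\max_{i,k}g_{ik}(0)$, because $g_{ik}\le g_{ik}(0)$. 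Hence, for $T$ small depending on $R$ and $\|\boldsymbol{x}^{in}\|_\infty$, $\mathcal{T}$ maps the ball into itself and is a contraction. This gives a unique continuous solution on $[0,T]$. The integrand is then continuous, so $\boldsymbol{x}\in C^1$ and the solution is classical. Uniqueness among classical solutions follows from the same Lipschitz estimate combined with Gr\"onwall's inequality.

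\textbf{Global extension.} We need an a priori bound excluding blow-up in finite time. Let $M(t):=\max_{\tau\in[0,t]}\max_i\|\boldsymbol{x}_i(\tau)\|$. For the instantaneous term, we use that $\psi$ is non-increasing, so $\psi\le\psi(0)$ and $\|F(\boldsymbol{x})\|\le 2N\psi(0)\max_i\|\boldsymbol{x}_i\|$. This linear growth holds despite the quadratic Lipschitz constant. For the memory term,
\[
\|G_i(\boldsymbol{x}_t)\|\le 2N\kappa_{\max}\max\bigl(M(t),\|\boldsymbol{x}^{in}\|_\infty\bigr).
\]
Together these give $M(t)\le \|\boldsymbol{x}^0\|+C\int_0^t\bigl(M(\tau)+\|\boldsymbol{x}^{in}\|_\infty\bigr)d\tau$. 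Gr\"onwall's inequality then yields $M(t)\le C'e^{Ct}$, which is finite on every bounded interval. The standard continuation argument follows. Let $T^*$ be maximal; if $T^*<\infty$, the solution stays bounded on $[0,T^*)$. The local step time depends only on that bound and on $\|\boldsymbol{x}^{in}\|_\infty$, after treating the already-solved segment $[0,t_0]$ as part of the known history. So we can restart at $t_0$ close to $T^*$ and extend beyond $T^*$, a contradiction.

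\textbf{Expected main obstacle.} The delicate point is the restart step. The history used at the restart must be the concatenation of $\boldsymbol{x}^{in}$ with the computed solution on $[0,t_0]$, and it must remain bounded and continuous. Continuity at $t=0$ holds because the solution starts at $\boldsymbol{x}^{in}(0)=\boldsymbol{x}^0$. We also need the local existence time to be uniform in the restart point. We will handle this by proving the local lemma for an arbitrary bounded continuous history, with $T$ depending only on the sup norm of that history. Equivalently, we can avoid restarts altogether: iterate the fixed point argument for the Volterra equation directly on $[0,T]$ for arbitrary $T$, using the Gr\"onwall bound to fix $R$ a priori, and use a weighted norm $\sup_t e^{-\beta t}\|\cdot\|$ with $\beta$ large on the truncated nonlinearity. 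This is the route we would try first.
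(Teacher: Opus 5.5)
Your proposal is correct and follows essentially the same route as the paper. Like the paper, you split the memory integral into a known forcing term from the initial history plus a Volterra convolution on $[0,t]$, apply Banach's fixed point theorem on a ball using the local Lipschitz bound for $z\mapsto\psi(\|z\|)z$, and rule out blow-up with a linear-growth Gr\"onwall estimate (via $\psi\le\psi(0)$ and $\int_0^\infty g_{ik}\,ds\le\kappa_{\max}$); your explicit treatment of the restart step only makes precise what the paper calls a standard continuation argument.
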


Analogously, for second-order systems \eqref{eq:model3} and \eqref{eq:model4}, the initial data consist of an initial position history $\boldsymbol{x}^{in} \in L^\infty(-\infty, 0]; (\mathbb{R}^n)^N)$ and an initial velocity history \\$\boldsymbol{v}^{in} \in L^\infty(-\infty, 0]; (\mathbb{R}^n)^N)$. And the global well-posedness theorem can be obtained as below.

\begin{theorem}\label{thm:wellposed-1}
For any given bounded continuous initial history \\$(\boldsymbol{x}^{0}, \boldsymbol{v}^{in}) \in L^\infty(-\infty, 0]; (\mathbb{R}^n)^N \times (\mathbb{R}^n)^N)$ for second-order models, there exists a unique global classical solution $(\boldsymbol{x},\boldsymbol{v}) \in C^1([0,\infty); (\mathbb{R}^n)^N \times (\mathbb{R}^n)^N)$ to Models III and IV.
\end{theorem}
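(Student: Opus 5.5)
The plan is to treat Models III and IV as delay equations with infinite delay and known initial velocity history. Because the memory term acts only on the velocities, the velocity equation can be solved first for Model III. For Model IV the positions enter only through $\psi$, and $\boldsymbol{x}_i(t)=\boldsymbol{x}_i^0+\int_0^t \boldsymbol{v}_i(r)\,dr$. So in both cases we look for $\boldsymbol{v}\in C([0,T];(\mathbb{R}^n)^N)$ as a fixed point of
\[
(\mathcal{T}\boldsymbol{v})_i(t)=\boldsymbol{v}_i^{in}(0)+\int_0^t\Big[\sum_{k=1}^N\psi(\|\boldsymbol{x}_i(r)-\boldsymbol{x}_k(r)\|)\big(\boldsymbol{v}_k(r)-\boldsymbol{v}_i(r)\big)+\sum_{k=1}^N\int_0^\infty g_{ik}(s)\big(\boldsymbol{v}_k(r-s)-\boldsymbol{v}_i(r-s)\big)\,ds\Big]dr .
\]
Here $\boldsymbol{x}$ is determined by $\boldsymbol{v}$ as above, $\boldsymbol{v}(r-s)=\boldsymbol{v}^{in}(r-s)$ whenever $r-s\le0$, and $\psi\equiv0$ for Model III.

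\textbf{Step 1 (splitting the memory term).} For $r\in[0,T]$, split $\int_0^\infty g_{ik}(s)(\cdots)\,ds=\int_0^r+\int_r^\infty$. The tail $\int_r^\infty g_{ik}(s)\big(\boldsymbol{v}_k^{in}(r-s)-\boldsymbol{v}_i^{in}(r-s)\big)ds$ is a known forcing term. It is continuous in $r$, because $\boldsymbol{v}^{in}$ is bounded and continuous and $g_{ik}$ is integrable with exponential decay (Remark \ref{rem:kernel_implication}). It is bounded by $2\kappa_{\max}\|\boldsymbol{v}^{in}\|_\infty$. The part on $[0,r]$ depends linearly on $\boldsymbol{v}|_{[0,r]}$, with norm at most $2N\kappa_{\max}\sup_{[0,r]}\|\boldsymbol{v}\|$.

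\textbf{Step 2 (local existence).} Work on the closed ball of radius $M$ in $C([0,T])$. The memory part is globally Lipschitz with constant $2N\kappa_{\max}$, so it causes no difficulty. For the instantaneous part, $\psi$ is bounded by $\psi(0)$ and Lipschitz by Assumption \ref{assum:psi}. Also $\|\boldsymbol{x}-\tilde{\boldsymbol{x}}\|_\infty\le T\|\boldsymbol{v}-\tilde{\boldsymbol{v}}\|_\infty$. Together these make $\psi(\|\boldsymbol{x}_i-\boldsymbol{x}_k\|)(\boldsymbol{v}_k-\boldsymbol{v}_i)$ locally Lipschitz in $\boldsymbol{v}$, with constant depending on $M$ and $T$. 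Choosing $T$ small, $\mathcal{T}$ maps the ball into itself and is a contraction. Banach's fixed point theorem then gives a unique continuous $\boldsymbol{v}$ on $[0,T]$. Since the integrand is continuous, $\boldsymbol{v}\in C^1$, and $\boldsymbol{x}\in C^1$ (indeed $C^2$).

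\textbf{Step 3 (global a priori bound and continuation).} This is the main obstacle: the naive Lipschitz constant of the $\psi$-term grows with $M$, so the a priori bound cannot come from the Lipschitz estimate. Instead use $0\le\psi\le\psi(0)$ directly. Set $m(t)=\sup_{r\le t}\|\boldsymbol{v}(r)\|$, which includes the history. From the integral equation,
\[
\|\boldsymbol{v}(t)\|\le\|\boldsymbol{v}^{in}(0)\|+\int_0^t\big(2N\psi(0)+2N\kappa_{\max}\big)\,m(r)\,dr ,
\]
and Gr\"onwall gives $m(t)\le\|\boldsymbol{v}^{in}\|_\infty e^{Ct}$. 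So the solution cannot blow up in finite time. Standard continuation, restarting with the solution on $(-\infty,T]$ as the new bounded continuous history, then yields a global solution. The local step size depends only on the a priori bound on compact time intervals.

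\textbf{Step 4 (uniqueness).} Take two solutions on $[0,T]$. Both are bounded there by Step 3, so the local Lipschitz estimate of Step 2 holds with a fixed $M$. Gr\"onwall on the difference then shows the two solutions coincide. The positions are recovered by $\boldsymbol{x}_i(t)=\boldsymbol{x}_i^0+\int_0^t\boldsymbol{v}_i$, which completes the argument.
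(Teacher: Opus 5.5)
Your proposal is correct and follows essentially the same route as the paper: you split the memory integral into a known history forcing term plus a Volterra part on $[0,t]$, obtain a local solution by Banach's fixed point theorem, and rule out blow-up with a Gr\"onwall bound that uses only $0\le\psi\le\psi(0)$ and $\kappa_{\max}$. The differences are cosmetic. You eliminate $\boldsymbol{x}$ via $\boldsymbol{x}_i=\boldsymbol{x}_i^0+\int_0^t\boldsymbol{v}_i$ and run the fixed point in $\boldsymbol{v}$ alone rather than on the pair $(\boldsymbol{x},\boldsymbol{v})$, and you prove uniqueness among all solutions explicitly by Gr\"onwall on the difference, which the paper leaves to the local contraction plus standard continuation.
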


\subsection{Consensus for First-Order Systems}
In this subsection, we present the asymptotic consensus results for the first-order opinion dynamics \eqref{eq:model1} and \eqref{eq:model2}. We first state the exponential consensus result for Model II \eqref{eq:model2}, where instantaneous communication forces and distributed fading memory act simultaneously.

\begin{theorem}[Unconditional Consensus for Model II]\label{thm:model2}
Let $\boldsymbol{x}(t)$ be the global classical solution to Model II \eqref{eq:model2}. Then $\boldsymbol{x}(t)$ achieves global exponential consensus unconditionally, i.e., there exist positive constants $C_0$ and $\gamma$ such that
\begin{equation*}
X_{\max}(t) \le C_0 e^{-\gamma t}, \qquad \forall t \ge 0.
\end{equation*}
\end{theorem}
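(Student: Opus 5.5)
The plan is a Lyapunov argument on the extended phase space given by Lemma \ref{lem:uncoupled_transform_1st}. For Model II the instantaneous term supplies the state dissipation directly, so the auxiliary multiplier $\Phi$ is not needed. First I would note that the mean $\bar{\boldsymbol x}(t):=\frac1N\sum_i \boldsymbol x_i(t)$ is conserved. Summing \eqref{eq:model2} over $i$, the $\psi$-terms cancel by the antisymmetry of $(i,k)\mapsto \boldsymbol x_k-\boldsymbol x_i$. The memory terms cancel because $g_{ik}=g_{ki}$ by Assumption \ref{assum:g}(i). Hence we may work with deviations, or equivalently with pairwise differences, using $\sum_{i,k}\|\boldsymbol x_i-\boldsymbol x_k\|^2 = 2N\sum_i\|\boldsymbol x_i-\bar{\boldsymbol x}\|^2$.

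Next I would rewrite the memory term as $\int_0^\infty \mu_{ik}(s)(\boldsymbol\eta_k^t-\boldsymbol\eta_i^t)\,ds$ using Lemma \ref{lem:uncoupled_transform_1st}, and define
\begin{equation*}
\mathcal E(t):=\frac12\sum_{i}\|\boldsymbol x_i(t)-\bar{\boldsymbol x}\|^2+\frac14\sum_{i,k}\int_0^\infty \mu_{ik}(s)\,\|\boldsymbol\eta_k^t(s)-\boldsymbol\eta_i^t(s)\|^2\,ds .
\end{equation*}
Finiteness of $\mathcal E$ needs two facts. First, $\|\boldsymbol\eta_k^t(s)-\boldsymbol\eta_i^t(s)\|\le Cs$, since the history is bounded for $s\ge t$ and the solution is continuous on $[0,t]$. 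Second, $\mu_{ik}(s)\le \mu_{ik}(0)e^{-\delta s}$ by Remark \ref{rem:kernel_implication}.

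Then I would differentiate $\mathcal E$ along the solution, term by term.
\begin{itemize}
\item The kinetic part gives $-\frac12\sum_{i,k}\psi(\|\boldsymbol x_i-\boldsymbol x_k\|)\|\boldsymbol x_i-\boldsymbol x_k\|^2$. It also produces a cross term $-\frac12\sum_{i,k}\int_0^\infty\mu_{ik}(\boldsymbol\eta_k-\boldsymbol\eta_i)\cdot(\boldsymbol x_k-\boldsymbol x_i)\,ds$; this uses symmetry of $\mu_{ik}$ to symmetrize.
\item The memory part is handled with the transport identity \eqref{eq:transport_uncoupled}. The $\boldsymbol x_i(t)$ piece exactly cancels the cross term above.
\item The $-\partial_s$ piece becomes $-\frac14\int\mu_{ik}\,\partial_s\|\cdot\|^2$. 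After integration by parts it equals $\frac14\sum_{i,k}\int_0^\infty\mu_{ik}'\|\boldsymbol\eta_k-\boldsymbol\eta_i\|^2\,ds$.
\end{itemize}
The boundary terms vanish: at $s=0$ because $\boldsymbol\eta^t(0)=\mathbf 0$, and at $\infty$ because of exponential decay against linear growth. By \eqref{eq:g_decay} this last term is at most $-\delta$ times the memory part of $\mathcal E$. So
\begin{equation*}
\frac{d}{dt}\mathcal E\le -\frac12\sum_{i,k}\psi(\|\boldsymbol x_i-\boldsymbol x_k\|)\|\boldsymbol x_i-\boldsymbol x_k\|^2-\delta\cdot(\text{memory part}) \le 0 .
\end{equation*}

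Monotonicity gives $\sum_i\|\boldsymbol x_i(t)-\bar{\boldsymbol x}\|^2\le 2\mathcal E(0)$, hence $X_{\max}(t)\le R:=2\sqrt{2\mathcal E(0)}$ for all $t\ge0$. This is the key step that makes the result unconditional: no smallness assumption is needed, because the energy bound is closed without using $\psi$. Since $\psi$ is non-increasing and positive, $\psi(\|\boldsymbol x_i-\boldsymbol x_k\|)\ge\psi(R)>0$. Then the first dissipation term is at most $-N\psi(R)\sum_i\|\boldsymbol x_i-\bar{\boldsymbol x}\|^2$, and so $\mathcal E'\le-\gamma_0\mathcal E$ with $\gamma_0=\min\{2N\psi(R),\delta\}$. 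Gr\"onwall's inequality gives $\mathcal E(t)\le\mathcal E(0)e^{-\gamma_0t}$, and therefore $X_{\max}(t)\le 2\sqrt{2\mathcal E(0)}\,e^{-\gamma_0t/2}$.

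The main technical obstacle is rigor in the differentiation step. One must justify differentiating under the integral in $s$ and the integration by parts at $s=\infty$ for the classical solution from Theorem \ref{thm:wellposed}. I would handle this by the uniform-in-$t$ linear growth bound on $\boldsymbol\eta^t$ together with the exponential decay of $\mu_{ik}$ and $\mu_{ik}'$. If needed, I would first truncate to $s\in[0,S]$ and let $S\to\infty$.
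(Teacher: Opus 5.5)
Your proposal is correct and follows the paper's proof essentially step for step. It uses the same Dafermos energy $E_0$; working with deviations from the conserved mean $\bar{\boldsymbol x}$ is equivalent to the paper's normalization $\sum_i \boldsymbol x_i(0)=\mathbf 0$. The steps then coincide: the cross terms cancel and integration by parts gives $\dot E_0\le 0$, the a priori bound $X_{\max}(t)\le 2\sqrt{2E_0(0)}$ yields $\psi(\|\boldsymbol x_i-\boldsymbol x_k\|)\ge\psi(R)>0$, and Gr\"onwall applies with rate $\min\{2N\psi(R),\delta\}$.

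Your added care about justifying the differentiation goes beyond the paper and is a welcome supplement. Two small corrections to it. What you need is integrability of $(-\mu_{ik}'(s))\,s^2$, which follows by integrating by parts against $\mu_{ik}(s)\le\mu_{ik}(0)e^{-\delta s}$, rather than pointwise exponential decay of $\mu_{ik}'$, which the assumptions do not give. Also, the linear growth bound on $\boldsymbol\eta^t$ is only available uniformly for $t$ in compact intervals, not globally in $t$, though that is all the argument needs.
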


When the instantaneous communication force is absent ($\psi \equiv 0$), Model II \eqref{eq:model2} degenerates into Model I \eqref{eq:model1}. In this regime, system \eqref{eq:model1} lacks instantaneous state dissipation. By constructing an auxiliary position multiplier functional $\Phi(t)$ to extract the hidden dissipation from the fading memory kernel, we establish that memory alone is sufficient to guarantee exponential consensus as the following theorem.

\begin{theorem}[Unconditional Consensus for Model I]\label{thm:model1}
Let $\boldsymbol{x}(t)$ be the global classical solution to Model I \eqref{eq:model1}. Then $\boldsymbol{x}(t)$ achieves global exponential consensus unconditionally, demonstrating that fading memory intrinsically induces sufficient dissipative mechanisms without instantaneous interactions.
\end{theorem}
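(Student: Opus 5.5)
Since $\psi\equiv 0$, I would prove Theorem \ref{thm:model1} by a Lyapunov argument on the extended state (positions plus relative past histories), built from Lemma \ref{lem:uncoupled_transform_1st}. Three preliminary facts come first.

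\begin{enumerate}
\item[(a)] \textbf{Conserved mean.} Summing \eqref{eq:model1} over $i$ and using the symmetry $g_{ik}=g_{ki}$, the mean $\bar{\x}:=\frac1N\sum_i \x_i(t)$ is constant for $t\ge0$.
\item[(b)] \textbf{Relative histories.} I set $\boldsymbol\theta_{ik}^t(s):=\boldsymbol\eta_k^t(s)-\boldsymbol\eta_i^t(s)$ and $\Delta_{ik}(t):=\x_k(t)-\x_i(t)$. By Lemma \ref{lem:uncoupled_transform_1st}, $\dot\x_i=\sum_k\int_0^\infty\mu_{ik}\boldsymbol\theta_{ik}^t\,ds$, $\partial_t\boldsymbol\theta_{ik}^t=\Delta_{ik}(t)-\partial_s\boldsymbol\theta_{ik}^t$ and $\boldsymbol\theta_{ik}^t(0)=\mathbf 0$.
\item[(c)] \textbf{Integrability.} Because $\x$ is bounded on $(-\infty,t]$ (initial history bounded, solution continuous), we have $\|\boldsymbol\theta^t_{ik}(s)\|\le Cs$. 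Together with $\mu_{ik}(s)\le\mu_{ik}(0)e^{-\delta s}$ from Remark \ref{rem:kernel_implication}, this makes all integrals below finite and kills the boundary terms at $s=\infty$ in integrations by parts.
\end{enumerate}

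\textbf{Step 1: Dafermos energy.} Let
$$E(t):=\frac12\sum_i\|\x_i-\bar{\x}\|^2+\frac14\sum_{i,k}\int_0^\infty\mu_{ik}(s)\|\boldsymbol\theta_{ik}^t(s)\|^2ds .$$
Symmetrizing, the first part has derivative $-\frac12\sum_{i,k}\int\mu_{ik}\,\Delta_{ik}\cdot\boldsymbol\theta_{ik}^t\,ds$. The term $\Delta_{ik}$ in $\partial_t\boldsymbol\theta$ produces exactly the opposite quantity, so the two cancel. The transport term, after integration by parts using $\boldsymbol\theta(0)=\mathbf0$, leaves
$$\frac{d}{dt}E=\frac14\sum_{i,k}\int_0^\infty\mu_{ik}'\|\boldsymbol\theta_{ik}^t\|^2ds=-\frac14\sum_{i,k}\int_0^\infty|\mu_{ik}'|\,\|\boldsymbol\theta_{ik}^t\|^2ds=:-D(t)\le0 .$$
By \eqref{eq:g_decay}, $\mu_{ik}\le|\mu'_{ik}|/\delta$, so $D$ controls the memory part of $E$. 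However, $D$ contains no dissipation of the positions themselves.

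\textbf{Step 2: Multiplier for the hidden dissipation.} I introduce
$$\Phi(t):=-\sum_{i,k}\Delta_{ik}(t)\cdot\int_0^\infty\mu_{ik}(s)\boldsymbol\theta_{ik}^t(s)\,ds .$$
Differentiating and integrating by parts in $s$ gives
$$\dot\Phi=-\sum_{i,k}\lambda_{ik}\|\Delta_{ik}\|^2-\sum_{i,k}\Delta_{ik}\cdot\int_0^\infty\mu_{ik}'\boldsymbol\theta_{ik}\,ds-\sum_{i,k}(\dot\x_k-\dot\x_i)\cdot\int_0^\infty\mu_{ik}\boldsymbol\theta_{ik}\,ds .$$
The first term is the hidden dissipation $-\lambda_{\min}\sum\|\Delta_{ik}\|^2$. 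I bound the other two as follows.
\begin{itemize}
\item For the second term, Cauchy--Schwarz with weight $|\mu'|$ gives $\bigl(\int|\mu'_{ik}|\|\boldsymbol\theta\|\bigr)^2\le\mu_{ik}(0)\int|\mu'_{ik}|\|\boldsymbol\theta\|^2$. Young's inequality then bounds it by $\frac{\lambda_{\min}}2\sum\|\Delta_{ik}\|^2+C D$.
\item For the third term, $\|\dot\x_i\|^2\le C\sum_k\lambda_{ik}\int\mu_{ik}\|\boldsymbol\theta\|^2$, so it is at most $C\sum\int\mu\|\boldsymbol\theta\|^2\le (C/\delta) D$.
\end{itemize}
Hence
$$\dot\Phi\le-\frac{\lambda_{\min}}2\sum_{i,k}\|\Delta_{ik}\|^2+C_1D .$$

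\textbf{Step 3: Conclusion.} Set $\mathcal E:=E+\varepsilon\Phi$. Since $|\Phi|\le C\bigl(\sum\|\Delta_{ik}\|^2+\sum\int\mu\|\boldsymbol\theta\|^2\bigr)$ and $\sum_i\|\x_i-\bar{\x}\|^2=\frac1{2N}\sum_{i,k}\|\Delta_{ik}\|^2$, choosing $\varepsilon$ small makes $\mathcal E$ equivalent to $E$. It also gives
$$\dot{\mathcal E}\le-(1-\varepsilon C_1)D-\frac{\varepsilon\lambda_{\min}}2\sum\|\Delta_{ik}\|^2\le-c\,\mathcal E .$$
Gr\"onwall's inequality then yields $E(t)\le CE(0)e^{-ct}$, and $X_{\max}(t)\le 2\sqrt{2E(t)}\le C_0e^{-\gamma t}$. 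Here $E(0)<\infty$ by fact (c).

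The main obstacle is Step 2, specifically the term with $\mu'$. Assumption \ref{assum:g} gives no bound on $|\mu'|$ in terms of $\mu$, so the naive estimate fails. The key point is to pair $\int|\mu'|\|\boldsymbol\theta\|$ with the dissipation $D$ itself, via Cauchy--Schwarz with weight $|\mu'|$, using $\int_0^\infty|\mu'_{ik}|=\mu_{ik}(0)<\infty$. I would also check the boundary terms at $s=\infty$ with the linear growth of $\boldsymbol\theta$ from fact (c).
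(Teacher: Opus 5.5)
Your proposal is correct and follows essentially the same route as the paper: the same Dafermos energy, the same multiplier (yours is exactly twice the paper's $\Phi$), and the same key step of pairing the $\mu'$ cross term with the dissipation through Cauchy--Schwarz with weight $|\mu'_{ik}|$ and $\int_0^\infty|\mu'_{ik}|\,ds=\mu_{ik}(0)$. The differences are cosmetic: you subtract the conserved mean rather than normalizing it to zero, and you bound $\|\dot{\boldsymbol x}_i\|^2$ by Cauchy--Schwarz with weight $\mu_{ik}$ followed by $\mu_{ik}\le|\mu'_{ik}|/\delta$, which avoids the paper's division by $-\mu'_{ik}$ where it may vanish.
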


%

\begin{remark}\label{rem:consensus_discussion}
{\rm
Theorems \ref{thm:model2} and \ref{thm:model1} reveal that distributed fading memory acts as an intrinsic globally stabilizing dissipative mechanism in multi-agent systems.

 In the absence of memory, establishing consensus in state-dependent or switching multi-agent networks fundamentally relies on persistent connectivity conditions, such as scrambling graph structures, uniform spanning trees, or persistent chain connections over time intervals \cite{Blondel2009, Canuto2012, Su2017}. In Model I, even when instantaneous communication is entirely absent ($\psi \equiv 0$), the kernel derivative condition $\mu'_{ik}(s) \le -\delta \mu_{ik}(s)$ supplies the necessary hidden dissipation to drive the whole network to consensus without requiring any time-dependent topological connectivity assumptions.
}
\end{remark}

\subsection{Flocking for Second-Order Systems}
In this subsection, we present the asymptotic flocking results for the second-order velocity alignment systems \eqref{eq:model3} and \eqref{eq:model4}. We first define the velocity energy functional as below:
\begin{equation*}\label{eq:Ev_recall}
E_v(t) := \frac12\sum_{i=1}^N\|\boldsymbol{v}_i(t)\|^2 + \frac14\int_0^\infty\sum_{i,k=1}^N \mu_{ik}(s)\|\boldsymbol{\xi}_i^t(s)-\boldsymbol{\xi}_k^t(s)\|^2\,ds.
\end{equation*}

For Model IV \eqref{eq:model4}, where instantaneous spatial interaction $\psi$ is coupled with velocity memory, the spatial diameter $X_{\max}(t)$ may expand, leading to a decay in the interaction strength $\psi(X_{\max}(t))$. By constructing a novel Lyapunov position-integral functional $L(t)$ that couples the velocity energy with the running maximum of spatial expansion, we establish the following flocking theorem.
\begin{theorem}[Unconditional and Conditional Flocking for Model IV]\label{thm:model4}
Let $(\boldsymbol{x}(t),\boldsymbol{v}(t))$ be the global classical solution to Model IV \eqref{eq:model4}. Suppose the initial velocity energy $E_v(0)$ and the initial maximum spatial diameter $X_{\max}(0)$ satisfy the following integral capacity condition:
\begin{equation}\label{eq:flocking_capacity_cond}
\sqrt{E_v(0)} < \frac{N}{2\sqrt{2}} \int_{X_{\max}(0)}^{\infty} \min \left\{ \psi(r),\, \frac{\delta}{2N} \right\} dr.
\end{equation}
Then, the system achieves asymptotic exponential flocking.
\end{theorem}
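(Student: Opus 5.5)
We will argue with a Lyapunov functional built on the Dafermos reformulation of Lemma \ref{lem:uncoupled_transform_2nd}, combining the energy $E_v$ with a velocity multiplier as in the proof for Model III. First we reduce to zero mean velocity. By symmetry of $\psi$ and $g_{ik}$, $\sum_i \dot{\bv}_i=0$, so the average velocity is conserved and we may assume it vanishes (Galilean shift). Differentiating $E_v$ along trajectories, using \eqref{eq:ibp_velocity}, \eqref{eq:transport_xi} and an integration by parts in $s$ (the boundary term at $s=0$ vanishes since $\boldsymbol\xi^t(0)=0$), gives
\begin{equation*}
\frac{d}{dt}E_v=-\frac12\sum_{i,k}\psi(\|\x_i-\x_k\|)\|\bv_i-\bv_k\|^2+\frac14\int_0^\infty\sum_{i,k}\mu_{ik}'(s)\|\boldsymbol\xi_i^t-\boldsymbol\xi_k^t\|^2ds .
\end{equation*}
The memory term is at most $-\frac{\delta}{4}\int\sum\mu_{ik}\|\cdot\|^2$ by \eqref{eq:g_decay}. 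Thus $E_v$ is non-increasing, but the memory part alone does not control $\sum\|\bv_i\|^2$. Under zero mean, $\frac12\sum_{i,k}\|\bv_i-\bv_k\|^2=N\sum_i\|\bv_i\|^2$. If all distances stay below some $R$, the first term therefore gives $-N\psi(R)\sum\|\bv_i\|^2$. Combined with the memory decay, this yields
\begin{equation*}
\frac{d}{dt}E_v\le -2\min\{N\psi(X_{\max}(t)),\delta\}E_v ,
\end{equation*}
which is where the quantity $\min\{\psi,\delta/(2N)\}$ in \eqref{eq:flocking_capacity_cond} comes from, up to the normalization of constants.

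Next we control the diameter. We have $\dot X_{\max}\le V_{\max}\le 2\max_i\|\bv_i\|\le 2\sqrt{2E_v}$ in the Dini sense, and we set $Y(t)=\max_{[0,t]}X_{\max}$. Writing $m(r)=\min\{\psi(r),\delta/(2N)\}$, which is non-increasing, the rate above gives
\begin{equation*}
\frac{d}{dt}\sqrt{E_v}\le -N\, m(Y(t))\sqrt{E_v},
\end{equation*}
since $\psi(X_{\max})\ge\psi(Y)$. We then define
\begin{equation*}
L(t)=\sqrt{E_v(t)}+\frac{N}{2\sqrt2}\int_{X_{\max}(0)}^{Y(t)}m(r)\,dr .
\end{equation*}
At times when $Y$ increases we have $D^+Y\le D^+X_{\max}\le 2\sqrt2\sqrt{E_v}$, so the integral term grows at most at rate $N m(Y)\sqrt{E_v}$. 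This is cancelled by the decay of $\sqrt{E_v}$, hence $D^+L\le0$. Consequently
\begin{equation*}
\frac{N}{2\sqrt2}\int_{X_{\max}(0)}^{Y(t)}m\le\sqrt{E_v(0)}<\frac{N}{2\sqrt2}\int_{X_{\max}(0)}^\infty m ,
\end{equation*}
and, because $m>0$, this forces $Y(t)\le R_*<\infty$ for some $R_*$ and all $t$. Feeding $R_*$ back into the rate estimate gives $E_v(t)\le E_v(0)e^{-2Nm(R_*)t}$. Since $V_{\max}^2\le 4\max_i\|\bv_i\|^2\le 8E_v$, velocity alignment is exponential, and spatial boundedness is exactly $Y\le R_*$.

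The main obstacle is the constant bookkeeping in the rate estimate, and it has to be settled before the $L$ construction. The issue is the $\bv$-part coercivity: obtaining $\frac{d}{dt}E_v\le -c\,m\,E_v$ with exactly the factor that matches $N/(2\sqrt2)$ in \eqref{eq:flocking_capacity_cond}. The first thing to try is the route above, where the $\psi$-dissipation alone controls the kinetic part via the zero-mean identity and $\delta$ controls the memory part. If the constants do not close exactly, I will add a small multiple of the multiplier $\Phi_v=-\sum_i\bv_i\cdot\int_0^\infty\mu_{ik}(\boldsymbol\xi_i-\boldsymbol\xi_k)$ from the Model III analysis and adjust constants. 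A second, technical issue is making the Dini-derivative argument rigorous at times where $Y$ is flat versus increasing; this can be handled by a standard case split.
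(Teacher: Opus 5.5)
Your argument is correct and is essentially the paper's proof. It uses the same dissipation identity for $E_v$ in the zero-mean frame, the same running maximum $Y(t)$ with $0\le D^+Y\le 2\sqrt{2}\sqrt{E_v}$, and the identical functional $L(t)=\sqrt{E_v}+\frac{N}{2\sqrt2}\int_{X_{\max}(0)}^{Y(t)}\min\{\psi,\frac{\delta}{2N}\}\,dr$ with $D^+L\le 0$, so the $\Phi_v$ fallback is unnecessary.

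One slip: the memory term only yields $-\delta$ times the memory energy, so the correct intermediate bound is $\frac{d}{dt}E_v\le-\min\{2N\psi(X_{\max}),\delta\}E_v=-2N\,m(X_{\max})E_v$, not $-2\min\{N\psi(X_{\max}),\delta\}E_v$. Your later steps, $\frac{d}{dt}\sqrt{E_v}\le -N\,m(Y)\sqrt{E_v}$ and the final rate $e^{-2Nm(R_*)t}$, rely only on the correct bound, so nothing downstream is affected.
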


For Model III \eqref{eq:model3}, where instantaneous spatial coupling is absent ($\psi \equiv 0$), the interaction is purely velocity-driven. By constructing an auxiliary velocity multiplier functional $\Phi_v(t)$, we prove that the velocity alignment decays exponentially fast without any constraint on the initial data, and the spatial trajectory remains uniformly bounded globally.

\begin{theorem}[Unconditional Flocking for Model III]\label{thm:model3}
Let $(\boldsymbol{x}(t),\boldsymbol{v}(t))$ be the global classical solution to Model III \eqref{eq:model3}. Then, without any restriction on the network size $N$ or kernel mass, the system globally achieves exponential velocity alignment, and the maximum spatial diameter remains uniformly bounded:
$$
\sup_{t\ge0} X_{\max}(t) < \infty.
$$
\end{theorem}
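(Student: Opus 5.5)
We prove Theorem \ref{thm:model3} with a Lyapunov functional on the extended phase space $(\boldsymbol{v},\boldsymbol{\xi}^t)$, built from $E_v$ plus a small multiple of an auxiliary multiplier $\Phi_v$. We then get spatial boundedness by integrating the exponential velocity decay.

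\textbf{Step 1: reduction to fluctuations and the energy identity.} Symmetry $g_{ik}=g_{ki}$ makes $\sum_i \dot{\boldsymbol{v}}_i=0$ for $t\ge0$, so the mean velocity $\bar{\boldsymbol v}$ is constant on $[0,\infty)$. The past history need not be centered, so we work directly with the differences $\boldsymbol{\xi}_i^t-\boldsymbol{\xi}_k^t$, which is what $E_v$ already does. By Lemma \ref{lem:uncoupled_transform_2nd}, $\dot{\boldsymbol v}_i=\sum_k\int_0^\infty\mu_{ik}(\boldsymbol\xi_k^t-\boldsymbol\xi_i^t)\,ds$. Differentiating $E_v$ and using \eqref{eq:transport_xi}, the term $\partial_t\boldsymbol\xi$ produces $\boldsymbol v_i(t)-\boldsymbol v_k(t)$, which cancels the kinetic contribution after symmetrization. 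The $-\partial_s$ part, integrated by parts (boundary terms vanish since $\boldsymbol\xi^t(0)=0$ and $\mu_{ik}(s)\|\boldsymbol\xi^t(s)\|^2\lesssim e^{-\delta s}s^2\to0$, using bounded data), gives
\[
\frac{d}{dt}E_v=\frac14\int_0^\infty\sum_{i,k}\mu_{ik}'(s)\|\boldsymbol\xi_i^t-\boldsymbol\xi_k^t\|^2ds\le-\frac{\delta}{4}\int_0^\infty\sum_{i,k}\mu_{ik}\|\boldsymbol\xi_i^t-\boldsymbol\xi_k^t\|^2ds .
\]
This dissipates only the memory part, not the velocity fluctuation.

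\textbf{Step 2: the multiplier.} We take
\[
\Phi_v(t):=-\sum_{i,k}\int_0^\infty\mu_{ik}(s)\big\langle \boldsymbol v_i(t)-\boldsymbol v_k(t),\,\boldsymbol\xi_i^t(s)-\boldsymbol\xi_k^t(s)\big\rangle ds .
\]
Differentiating, the term where $\partial_t\boldsymbol\xi$ contributes $\boldsymbol v_i(t)-\boldsymbol v_k(t)$ yields $-\sum_{i,k}\lambda_{ik}\|\boldsymbol v_i-\boldsymbol v_k\|^2\le-\lambda_{\min}\sum_{i,k}\|\boldsymbol v_i-\boldsymbol v_k\|^2$. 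This is the hidden dissipation.

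The remaining terms are bounded as follows:
\begin{itemize}
\item The $-\partial_s\boldsymbol\xi$ part, after integration by parts in $s$, becomes $\int\mu_{ik}'\langle\cdot,\cdot\rangle$. We bound it by Cauchy--Schwarz and Young, using $|\mu'|\le$ something. This is the delicate point: only $\mu'\le-\delta\mu$ is assumed. We either add the natural assumption $|\mu'|\le C\mu$, or avoid integration by parts and write the term as $\langle \boldsymbol v_i-\boldsymbol v_k,\int\mu_{ik}(\boldsymbol v_i(t-s)-\boldsymbol v_k(t-s))ds\rangle$. Since $\int_0^\infty\mu_{ik}(s)\big(\boldsymbol v_i(t-s)-\boldsymbol v_k(t-s)\big)ds$ is $\frac{d}{dt}$-related, the first route seems cleaner; I would use $-\mu'\ge0$ and write $\int(-\mu')\|\boldsymbol\xi\|^2$, which is exactly what $\frac{d}{dt}E_v$ controls, with $\int(-\mu')\,ds\le\mu^0_{\max}$.
\item The term containing $\dot{\boldsymbol v}$ is bounded via Lemma \ref{lem:uncoupled_transform_2nd} and Cauchy--Schwarz by $C(N,\lambda_{\max})\int\sum\mu\|\boldsymbol\xi_i-\boldsymbol\xi_k\|^2$.
\end{itemize}
Accordingly, in Step 1 we keep the dissipation in the form $\frac14\int\sum(-\mu')\|\cdot\|^2$ rather than reducing to $\delta\mu$, which also dominates $\frac{\delta}{4}\int\mu\|\cdot\|^2$.

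\textbf{Step 3: closing the estimate.} Set $\mathcal E_v=E_v^{\mathrm{fl}}+\varepsilon\Phi_v$, where $E_v^{\mathrm{fl}}$ replaces $\frac12\sum\|\boldsymbol v_i\|^2$ by $\frac1{4N}\sum_{i,k}\|\boldsymbol v_i-\boldsymbol v_k\|^2$; this has the same derivative because $\bar{\boldsymbol v}$ is conserved. For $\varepsilon$ small, Young gives $\mathcal E_v\simeq E_v^{\mathrm{fl}}$ and $\dot{\mathcal E}_v\le -c\,\mathcal E_v$. Hence $V_{\max}(t)\le C e^{-\gamma t}$.

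\textbf{Step 4: spatial boundedness.} We have $\|\boldsymbol x_i(t)-\boldsymbol x_k(t)\|\le X_{\max}(0)+\int_0^\infty V_{\max}\,ds<\infty$.

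The main obstacle is Step 2: controlling the multiplier's derivative terms purely by the two dissipations. The $\mu'$-weighted term is handled by keeping $-\mu'$ in the energy dissipation, and the $\dot{\boldsymbol v}$ term by $\lambda_{\max}$ bounds.
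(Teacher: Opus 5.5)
Your proposal is correct and follows essentially the paper's route: the same multiplier $\Phi_v$ (up to a factor $2$), with the $\mu'$-cross term handled by Young's inequality weighted by $-\mu_{ik}'$ together with $\int_0^\infty(-\mu_{ik}')\,ds=\mu_{ik}(0)$. When you write Step 3 out, fix the Young parameter large relative to $\mu^0_{\max}/\lambda_{\min}$ first and only then shrink $\varepsilon$, since a small $\varepsilon$ alone does not absorb this term; the rest is a Cauchy--Schwarz bound on $\sum_i\|\dot{\boldsymbol v}_i\|^2$ absorbed by the $E_v$ dissipation, followed by integration of the exponential velocity decay. Your fluctuation energy $E_v^{\mathrm{fl}}$ is a small improvement, since it makes explicit the zero-mean (center-of-mass) normalization that the paper uses only implicitly when it invokes $\sum_{i,k}\|\boldsymbol v_i-\boldsymbol v_k\|^2=2N\sum_i\|\boldsymbol v_i\|^2$.
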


\begin{remark}\label{rem:flocking_discussion}
{\rm
We highlight several structural features of the flocking framework established in Theorems \ref{thm:model4} and \ref{thm:model3}:
\begin{enumerate}
    \item [(i)] \textbf{Unconditional vs. Conditional Dichotomy:} In Theorem \ref{thm:model4}, condition \eqref{eq:flocking_capacity_cond} provides a sharp threshold that unifies both heavy-tailed and short-range communication rates:
    \begin{itemize}
        \item If the communication function $\psi$ is non-integrable at infinity (e.g., $\psi(r) = \frac{K}{(1+r^2)^\beta}$ with $\beta \le 1/2$), the integral $\int_{X_{\max}(0)}^\infty \min\{\psi(r), \frac{\delta}{2N}\} dr$ diverges to $+\infty$. Consequently, condition \eqref{eq:flocking_capacity_cond} holds for arbitrary initial data $(X_{\max}(0), E_v(0))$, recovering unconditional global flocking.
        \item If $\psi$ is integrable at infinity (short-range interactions with $\beta > 1/2$), condition \eqref{eq:flocking_capacity_cond} requires the initial velocity fluctuation $\sqrt{E_v(0)}$ to be strictly bounded by the total spatial communication capacity, establishing conditional flocking.
    \end{itemize}
    This dichotomy perfectly matches the celebrated analytical results established for the classical non-delayed Cucker--Smale model \cite{Cucker2007, Ha2009}.
    \item [(ii)] \textbf{Role of Memory Dissipation in Model III:} In Model III, even in the complete absence of position-dependent communication forces, the fading memory alone provides sufficient friction-like dissipation in velocity space, ensuring that $\boldsymbol{v}_i(t)$ converges to the average velocity exponentially fast and preventing the spatial configuration from dispersing to infinity.
\end{enumerate}
}
\end{remark}

\section{Global Well-Posedness for both first and second order systems}\label{sec:proof_wp}
In this section, we provide the proof of Theorem \ref{thm:wellposed} and Theorem \ref{thm:wellposed-1} regarding the global-in-time existence and uniqueness of classical solutions. We present the analysis by considering the first-order opinion dynamics and the second-order flocking kinematics separately. Our general strategy relies on rewriting the integro-differential systems into equivalent Volterra-type integral equations. We first establish local well-posedness on a short time interval via the Banach Fixed-Point Theorem. Subsequently, we derive uniform a priori estimates on the state variables using Gr\"onwall's inequality to prevent finite-time blow-up, thereby extending the local solutions globally.

\noindent {\bf Proof of Theorem \ref{thm:wellposed}.} Setting
$$
X^{in} := \max_{i\in[N]} \sup_{s \le 0} \|\boldsymbol{x}^{in}_i(s)\| < \infty,
$$
we rewrite Model II \eqref{eq:model2} for $t > 0$ by splitting the memory integral over $s\in[0,t]$ and $s\in[t,\infty)$. Substituting $\tau = t-s$ on $[0,t]$ and $u = s-t$ on $[0,\infty)$ gives:
\begin{equation}\label{eq:wp_split_1st}
\begin{aligned}
\dot{\boldsymbol{x}}_i(t) &= \sum_{k=1}^N \psi(\|\boldsymbol{x}_i(t)-\boldsymbol{x}_k(t)\|) (\boldsymbol{x}_k(t)-\boldsymbol{x}_i(t)) + \sum_{k=1}^N \int_0^t g_{ik}(t-\tau)(\boldsymbol{x}_k(\tau)-\boldsymbol{x}_i(\tau)) \, d\tau \\
&\quad + \sum_{k=1}^N \int_0^\infty g_{ik}(t+u)(\boldsymbol{x}^{in}_k(-u)-\boldsymbol{x}^{in}_i(-u)) \, du.
\end{aligned}
\end{equation}
The third term is a deterministic forcing term $\boldsymbol{H}_{ik}(t) := \int_0^\infty g_{ik}(t+u)(\boldsymbol{x}^{in}_k(-u)-\boldsymbol{x}^{in}_i(-u)) \, du$, whose norm is bounded by
\begin{equation}\label{eq:H_bound_1st}
\|\boldsymbol{H}_{ik}(t)\| \le 2X^{in} \int_0^\infty g_{ik}(t+u)\,du \le 2X^{in} \kappa_{ik} \le C_H := 2X^{in} \kappa_{\max} < \infty.
\end{equation}

Integrating \eqref{eq:wp_split_1st} over $[0,t]$, we define the operator $\mathcal{L}$ acting on $\boldsymbol{x}\in C([0,T];(\mathbb{R}^n)^N)$ by
\begin{equation}\label{eq:operatorL_1st}
\begin{aligned}
[\mathcal{L}(\boldsymbol{x})]_i(t) &= \boldsymbol{x}^{in}_i(0) + \int_0^t \sum_{k=1}^N \psi(\|\boldsymbol{x}_i(r)-\boldsymbol{x}_k(r)\|)(\boldsymbol{x}_k(r)-\boldsymbol{x}_i(r)) \, dr \\
&\quad + \int_0^t \sum_{k=1}^N \int_0^r g_{ik}(r-\tau)(\boldsymbol{x}_k(\tau)-\boldsymbol{x}_i(\tau)) \, d\tau\, dr + \int_0^t \sum_{k=1}^N \boldsymbol{H}_{ik}(r) \, dr.
\end{aligned}
\end{equation}

Define $F(\boldsymbol{x}_i,\boldsymbol{x}_k):=\psi(\|\boldsymbol{x}_i-\boldsymbol{x}_k\|)(\boldsymbol{x}_k-\boldsymbol{x}_i)$. For any state vectors with
$$
\|\boldsymbol{x}_i\|,\|\boldsymbol{x}_k\|,\|\tilde{\boldsymbol{x}}_i\|,\|\tilde{\boldsymbol{x}}_k\|\le R,
$$
using the global Lipschitz continuity of $\psi$ with constant $[\psi]_{\mathrm{Lip}}$, we obtain
\begin{equation*}\label{eq:FA_lip}
\|F(\boldsymbol{x}_i,\boldsymbol{x}_k)-F(\tilde{\boldsymbol{x}}_i,\tilde{\boldsymbol{x}}_k)\| \le L_R\big(\|\boldsymbol{x}_i-\tilde{\boldsymbol{x}}_i\|+\|\boldsymbol{x}_k-\tilde{\boldsymbol{x}}_k\|\big),
\end{equation*}
where $L_R := 2R [\psi]_{\mathrm{Lip}} + \psi(0)$.

Define the Banach space $\mathcal{X}_T = C([0,T];(\mathbb{R}^n)^N)$ equipped with
$$
\|\boldsymbol{x}\|_{\mathcal{X}_T} := \sup_{t\in[0,T]}\max_{i\in[N]}\|\boldsymbol{x}_i(t)\|.
$$
Let $R := 2X^{in}$ and $B_R := \{\boldsymbol{x}\in\mathcal{X}_T: \|\boldsymbol{x}\|_{\mathcal{X}_T} \le R\}$.
For any $\boldsymbol{x} \in B_R$, we have $\|\boldsymbol{x}_k(\tau)-\boldsymbol{x}_i(\tau)\| \le 2R$. From \eqref{eq:operatorL_1st} and \eqref{eq:H_bound_1st},
\begin{align*}
\|[\mathcal{L}(\boldsymbol{x})]_i(t)\| &\le X^{in} + \int_0^t \sum_{k=1}^N \psi(0)(2R)\,dr + \int_0^t \sum_{k=1}^N \kappa_{\max}(2R)\,dr + \int_0^t \sum_{k=1}^N C_H\,dr \\
&\le X^{in} + t N \Big[ 2R \big(\psi(0)+\kappa_{\max}\big) + C_H \Big] =: X^{in} + t K_1(R).
\end{align*}
Choosing $T_1 := \frac{X^{in}}{K_1(R)}$ ensures $\mathcal{L}(B_R) \subset B_R$ for $T \le T_1$. Furthermore, for $\boldsymbol{x},\tilde{\boldsymbol{x}} \in B_R$,
{\small
\begin{align*}
\|[\mathcal{L}(\boldsymbol{x})]_i(t) - [\mathcal{L}(\tilde{\boldsymbol{x}})]_i(t)\| &\le \int_0^t \sum_{k=1}^N L_R\big(\|\boldsymbol{x}_i(r)-\tilde{\boldsymbol{x}}_i(r)\|+\|\boldsymbol{x}_k(r)-\tilde{\boldsymbol{x}}_k(r)\|\big)\,dr \\
&\quad + \int_0^t \sum_{k=1}^N \int_0^r g_{ik}(r-\tau) \big(\|\boldsymbol{x}_i(\tau)-\tilde{\boldsymbol{x}}_i(\tau)\|+\|\boldsymbol{x}_k(\tau)-\tilde{\boldsymbol{x}}_k(\tau)\|\big)\,d\tau\,dr \\
&\le T \cdot 2 N (L_R + \kappa_{\max}) \|\boldsymbol{x} - \tilde{\boldsymbol{x}}\|_{\mathcal{X}_T}.
\end{align*}
}
Setting $T_2 := \frac{1}{4 N (L_R + \kappa_{\max})}$, the operator $\mathcal{L}$ is a strict contraction on $B_R$ for $T := \min\{T_1, T_2\}$. By the Banach Fixed-Point Theorem, a unique local solution exists on $[0,T]$.

To extend the solution globally, define
$$
M(t) := \sup_{r\in[0,t]}\max_{i\in[N]}\|\boldsymbol{x}_i(r)\|.
$$
Along the local solution,
\begin{equation*}
M(t) \le X^{in} + N C_H t + \int_0^t 2 N \big(\psi(0)+\kappa_{\max}\big) M(\tau)\,d\tau.
\end{equation*}
By Gr\"onwall's inequality, for all $t \ge 0,$
\begin{equation*}\label{eq:gronwall_bound_1st}
M(t) \le \big(X^{in} + N C_H t\big) e^{2 N (\psi(0)+\kappa_{\max}) t} < \infty.
\end{equation*}
Since $M(t)$ cannot experience finite-time blow-up, standard continuation arguments confirm that $T_{\max} = \infty$. Model I \eqref{eq:model1} follows directly as a special case by setting $\psi \equiv 0$.

\noindent {\bf Proof of Theorem \ref{thm:wellposed-1}.} For Model IV \eqref{eq:model4}, let
$$
X^0 := \max_{i\in[N]}  \|\boldsymbol{x}^{0}_i\| \quad \text{and} \quad V^{in} := \max_{i\in[N]} \sup_{s \le 0} \|\boldsymbol{v}^{in}_i(s)\| < \infty.
$$
For $t > 0$, splitting the memory integral over $s\in[0,t]$ and $s\in[t,\infty)$ yields:
{\footnotesize
\begin{equation}\label{eq:wp_split_2nd}
\left\{
\begin{aligned}
\dot{\boldsymbol{x}}_i(t) &= \boldsymbol{v}_i(t),  \\
\dot{\boldsymbol{v}}_i(t) &= \sum_{k=1}^N \psi(\|\boldsymbol{x}_i(t)-\boldsymbol{x}_k(t)\|) (\boldsymbol{v}_k(t)-\boldsymbol{v}_i(t)) + \sum_{k=1}^N \int_0^t g_{ik}(t-\tau)(\boldsymbol{v}_k(\tau)-\boldsymbol{v}_i(\tau)) \, d\tau + \sum_{k=1}^N \boldsymbol{G}_{ik}(t),
\end{aligned}
\right.
\end{equation}
}
where $\boldsymbol{G}_{ik}(t) := \int_0^\infty g_{ik}(t+u)(\boldsymbol{v}^{in}_k(-u)-\boldsymbol{v}^{in}_i(-u)) \, du$ satisfies
$$
\|\boldsymbol{G}_{ik}(t)\| \le 2V^{in} \kappa_{\max} =: C_G < \infty.
$$

Integrating \eqref{eq:wp_split_2nd} over $[0,t]$, we define the solution operator $\mathcal{T}(\boldsymbol{x},\boldsymbol{v}) := (\mathcal{T}_x(\boldsymbol{x},\boldsymbol{v}), \mathcal{T}_v(\boldsymbol{x},\boldsymbol{v}))$ on $C([0,T];(\mathbb{R}^n)^N \times (\mathbb{R}^n)^N)$:
\begin{align*}
[\mathcal{T}_x(\boldsymbol{x},\boldsymbol{v})]_i(t) &= \boldsymbol{x}^0_i + \int_0^t \boldsymbol{v}_i(r)\,dr, \\
[\mathcal{T}_v(\boldsymbol{x},\boldsymbol{v})]_i(t) &= \boldsymbol{v}^{in}_i(0) + \int_0^t \sum_{k=1}^N \psi(\|\boldsymbol{x}_i(r)-\boldsymbol{x}_k(r)\|)(\boldsymbol{v}_k(r)-\boldsymbol{v}_i(r)) \, dr \\
&\quad + \int_0^t \sum_{k=1}^N \int_0^r g_{ik}(r-\tau)(\boldsymbol{v}_k(\tau)-\boldsymbol{v}_i(\tau)) \, d\tau\, dr + \int_0^t \sum_{k=1}^N \boldsymbol{G}_{ik}(r) \, dr.
\end{align*}
Define the product metric space $\mathcal{Y}_T = C([0,T];(\mathbb{R}^n)^N \times (\mathbb{R}^n)^N)$ with norm
$$
\|(\boldsymbol{x},\boldsymbol{v})\|_{\mathcal{Y}_T} := \sup_{t\in[0,T]}\max_{i\in[N]} \big( \|\boldsymbol{x}_i(t)\| + \|\boldsymbol{v}_i(t)\| \big).
$$
Setting $R := 2(X^0 + V^{in})$ and $B_R := \{(\boldsymbol{x},\boldsymbol{v}) \in \mathcal{Y}_T : \|(\boldsymbol{x},\boldsymbol{v})\|_{\mathcal{Y}_T} \le R\}$, the local Lipschitz estimates on $\psi$ and integral bounds guarantee that $\mathcal{T}$ is a strict self-mapping and contraction on $B_R$ for a sufficiently small local time horizon $T > 0$.

For global continuation, defining
$$
M_v(t) := \sup_{r\in[0,t]}\max_{i\in[N]}\|\boldsymbol{v}_i(r)\|,
$$
we derive
\begin{equation*}
M_v(t) \le V^{in} + N C_G t + \int_0^t 2 N \big(\psi(0)+\kappa_{\max}\big) M_v(\tau)\,d\tau.
\end{equation*}
By Gr\"onwall's inequality,  for all $t \ge 0,$
$$
M_v(t) \le \big(V^{in} + N C_G t\big) e^{2 N (\psi(0)+\kappa_{\max}) t} < \infty.
$$
Consequently,
$$
M_x(t) := \sup_{r\in[0,t]}\max_{i\in[N]}\|\boldsymbol{x}_i(r)\| \le X^0 + \int_0^t M_v(\tau)\,d\tau < \infty.
$$
This prevents finite-time blow-up and proves that $T_{\max} = \infty$. Model III \eqref{eq:model3} follows as a special case with $\psi \equiv 0$.

\section{Exponential Consensus for First-Order Systems}\label{sec:proof_1st}

This section is devoted to the asymptotic consensus analysis of the first-order opinion dynamics, completing the proofs of Theorem \ref{thm:model2} and Theorem \ref{thm:model1}. We begin by analyzing Model II, where the coexistence of instantaneous communication and fading memory naturally provides sufficient state dissipation, which can be directly extracted using the standard Dafermos energy functional. In contrast, Model I lacks instantaneous damping. To uncover the hidden dissipation induced exclusively by the fading memory kernel, we introduce an auxiliary cross-multiplier functional. By combining this multiplier with the standard energy, we construct a  Lyapunov functional to prove unconditional exponential consensus.

\subsection{Exponential Consensus for system \eqref{eq:model2}}
By Lemma \ref{lem:uncoupled_transform_1st}, Model II \eqref{eq:model2} maps into the autonomous extended system:
\begin{equation}\label{eq:sys_auto_model2}
\dot{\boldsymbol{x}}_i(t) = \sum_{k=1}^N \psi(\|\boldsymbol{x}_i(t)-\boldsymbol{x}_k(t)\|)\big(\boldsymbol{x}_k(t) - \boldsymbol{x}_i(t)\big) + \sum_{k=1}^N\int_0^\infty \mu_{ik}(s)\big(\boldsymbol{\eta}_k^t(s) - \boldsymbol{\eta}_i^t(s)\big)\,ds.
\end{equation}
We define the first-order position energy functional $E_0(t)$:
\begin{equation*}\label{eq:E0_def}
E_0(t) := \frac{1}{2}\sum_{i=1}^N\|\boldsymbol{x}_i(t)\|^2 + \frac{1}{4}\int_0^\infty\sum_{i,k=1}^N \mu_{ik}(s)\|\boldsymbol{\eta}_i^t(s)-\boldsymbol{\eta}_k^t(s)\|^2\,ds.
\end{equation*}

\noindent {\bf Proof of Theorem \ref{thm:model2}.} Summing \eqref{eq:sys_auto_model2} over $i \in [N]$ gives $\frac{d}{dt}\sum\limits_{i=1}^N \boldsymbol{x}_i(t) = \mathbf{0}$. By Galilean invariance, we assume without loss of generality that $\sum\limits_{i=1}^N \boldsymbol{x}_i(0) = \mathbf{0}$, whence $\sum\limits_{i=1}^N \boldsymbol{x}_i(t) \equiv \mathbf{0}$ for all $t \ge 0$.

Differentiating the state component along \eqref{eq:sys_auto_model2} and symmetrically exchanging indices $i \leftrightarrow k$ yields:
\begin{align}\label{eq:E0_dot_state}
\frac{d}{dt}\left( \frac{1}{2}\sum_{i=1}^N\|\boldsymbol{x}_i(t)\|^2 \right) &= \sum_{i=1}^N \boldsymbol{x}_i(t)^T \dot{\boldsymbol{x}}_i(t) \notag \\
&= \sum_{i=1}^N \boldsymbol{x}_i(t)^T \Biggl( \sum_{k=1}^N \psi(\|\boldsymbol{x}_i(t)-\boldsymbol{x}_k(t)\|)\big(\boldsymbol{x}_k(t) - \boldsymbol{x}_i(t)\big) \notag \\
&\qquad\qquad\qquad + \sum_{k=1}^N\int_0^\infty \mu_{ik}(s)\big(\boldsymbol{\eta}_k^t(s) - \boldsymbol{\eta}_i^t(s)\big)\,ds \Biggr) \notag \\
&= -\frac{1}{2}\sum_{i,k=1}^N \psi(\|\boldsymbol{x}_i(t)-\boldsymbol{x}_k(t)\|)\|\boldsymbol{x}_i(t)-\boldsymbol{x}_k(t)\|^2 \notag \\
&\quad -\frac{1}{2}\int_0^\infty\sum_{i,k=1}^N \mu_{ik}(s)(\boldsymbol{x}_i(t)-\boldsymbol{x}_k(t))^T(\boldsymbol{\eta}_i^t(s)-\boldsymbol{\eta}_k^t(s))\,ds.
\end{align}

Differentiating the memory energy component utilizing the transport identity \eqref{eq:transport_uncoupled}:
{\footnotesize
\begin{align}\label{eq:E0_dot_memory}
&\frac{d}{dt} \left( \frac{1}{4}\int_0^\infty\sum_{i,k=1}^N \mu_{ik}(s)\|\boldsymbol{\eta}_i^t(s)-\boldsymbol{\eta}_k^t(s)\|^2\,ds \right) \notag \\
&= \frac{1}{2}\int_0^\infty\sum_{i,k=1}^N\mu_{ik}(s)(\boldsymbol{\eta}_i^t(s)-\boldsymbol{\eta}_k^t(s))^T\bigl( \partial_t \boldsymbol{\eta}_i^t(s) - \partial_t \boldsymbol{\eta}_k^t(s) \bigr)\,ds \notag \\
&= \frac{1}{2}\int_0^\infty\sum_{i,k=1}^N\mu_{ik}(s)(\boldsymbol{\eta}_i^t(s)-\boldsymbol{\eta}_k^t(s))^T\bigl[ (\boldsymbol{x}_i(t)-\boldsymbol{x}_k(t)) - \partial_s (\boldsymbol{\eta}_i^t(s)-\boldsymbol{\eta}_k^t(s)) \bigr]\,ds \notag \\
&= \frac{1}{2}\int_0^\infty\sum_{i,k=1}^N\mu_{ik}(s)(\boldsymbol{x}_i(t)-\boldsymbol{x}_k(t))^T(\boldsymbol{\eta}_i^t(s)-\boldsymbol{\eta}_k^t(s))\,ds - \frac{1}{4}\int_0^\infty\sum_{i,k=1}^N\mu_{ik}(s)\partial_s\|\boldsymbol{\eta}_i^t(s)-\boldsymbol{\eta}_k^t(s)\|^2\,ds.
\end{align}
}

Summing \eqref{eq:E0_dot_state} and \eqref{eq:E0_dot_memory}, the cross-terms cancel identically. Applying integration by parts to the remaining $\partial_s$ term with boundary conditions $\boldsymbol{\eta}_i^t(0)=\mathbf{0}$ and $\lim\limits_{t\rightarrow\infty}\mu_{ik}(t)=0$:
{\small
\begin{align}\label{eq:dot_E0_final}
\dot{E}_0(t) &= -\frac{1}{2}\sum_{i,k=1}^N \psi(\|\boldsymbol{x}_i(t)-\boldsymbol{x}_k(t)\|)\|\boldsymbol{x}_i(t)-\boldsymbol{x}_k(t)\|^2 \notag \\
&\quad - \frac{1}{4}\sum_{i,k=1}^N \left[ \mu_{ik}(s) \|\boldsymbol{\eta}_i^t(s) - \boldsymbol{\eta}_k^t(s)\|^2 \right]_0^\infty + \frac{1}{4}\int_0^\infty\sum_{i,k=1}^N \mu'_{ik}(s)\|\boldsymbol{\eta}_i^t(s)-\boldsymbol{\eta}_k^t(s)\|^2\,ds \notag \\
&= -\frac{1}{2}\sum_{i,k=1}^N \psi(\|\boldsymbol{x}_i(t)-\boldsymbol{x}_k(t)\|)\|\boldsymbol{x}_i(t)-\boldsymbol{x}_k(t)\|^2 + \frac{1}{4}\int_0^\infty\sum_{i,k=1}^N \mu'_{ik}(s)\|\boldsymbol{\eta}_i^t(s)-\boldsymbol{\eta}_k^t(s)\|^2\,ds \le 0.
\end{align}
}

Since $E_0(t) \le E_0(0)$ for all $t \ge 0$, the maximum spatial diameter is uniformly bounded:
\begin{equation*}
X_{\max}(t) \le 2 \max_{i\in[N]} \|\boldsymbol{x}_i(t)\| \le 2\sqrt{2E_0(0)} =: R_{\max}, \qquad \forall t \ge 0.
\end{equation*}
By Assumption \ref{assum:psi}, $\psi(\|\boldsymbol{x}_i(t)-\boldsymbol{x}_k(t)\|) \ge \psi(R_{\max}) > 0$. Using $\mu'_{ik}(s) \le -\delta \mu_{ik}(s)$ from Assumption \ref{assum:g} together with the algebraic identity $\sum\limits_{i,k=1}^N \|\boldsymbol{x}_i(t)-\boldsymbol{x}_k(t)\|^2 = 2N \sum\limits_{i=1}^N \|\boldsymbol{x}_i(t)\|^2$, we obtain
\begin{align*}
\dot{E}_0(t) &\le -\psi(R_{\max}) N \sum_{i=1}^N\|\boldsymbol{x}_i(t)\|^2 - \frac{\delta}{4}\int_0^\infty\sum_{i,k=1}^N \mu_{ik}(s)\|\boldsymbol{\eta}_i^t(s)-\boldsymbol{\eta}_k^t(s)\|^2\,ds \le -\gamma E_0(t),
\end{align*}
where $\gamma := \min\{2N\psi(R_{\max}), \delta\} > 0$. Applying Gr\"onwall's Lemma gives $E_0(t) \le E_0(0)e^{-\gamma t}$, which proves unconditional exponential consensus for Model II.

\subsection{Exponential Consensus for system \eqref{eq:model1}} For Model I ($\psi \equiv 0$), equation \eqref{eq:dot_E0_final} reduces to
$$
\dot{E}_0(t) = \frac{1}{4}\int_0^\infty\sum_{i,k=1}^N \mu'_{ik}(s)\|\boldsymbol{\eta}_i^t(s)-\boldsymbol{\eta}_k^t(s)\|^2\,ds \le 0,
$$
which lacks instantaneous state dissipation. We introduce the auxiliary multiplier functional:
\begin{equation}\label{eq:multiplier_M1}
\Phi(t) := -\frac{1}{2}\int_0^\infty\sum_{i,k=1}^N\mu_{ik}(s)\big(\boldsymbol{x}_i(t)-\boldsymbol{x}_k(t)\big)^T\big(\boldsymbol{\eta}_i^t(s)-\boldsymbol{\eta}_k^t(s)\big)\,ds.
\end{equation}

\noindent {\bf Proof of Theorem \ref{thm:model1}.}
Differentiating \eqref{eq:multiplier_M1} along \eqref{eq:sys_auto_model2} (with $\psi \equiv 0$) using $\partial_t \boldsymbol{\eta}_i^t(s) = \boldsymbol{x}_i(t) - \partial_s \boldsymbol{\eta}_i^t(s)$:
\begin{align}\label{eq:Phi_dot_split}
\dot{\Phi}(t) &= -\frac{1}{2}\int_0^\infty\sum_{i,k=1}^N\mu_{ik}(s)\big(\dot{\boldsymbol{x}}_i(t)-\dot{\boldsymbol{x}}_k(t)\big)^T\big(\boldsymbol{\eta}_i^t(s)-\boldsymbol{\eta}_k^t(s)\big)\,ds \notag \\
&\quad - \frac{1}{2}\int_0^\infty\sum_{i,k=1}^N\mu_{ik}(s)\big(\boldsymbol{x}_i(t)-\boldsymbol{x}_k(t)\big)^T\big[\big(\boldsymbol{x}_i(t)-\boldsymbol{x}_k(t)\big) - \partial_s\big(\boldsymbol{\eta}_i^t(s)-\boldsymbol{\eta}_k^t(s)\big)\big]\,ds.
\end{align}

Exchanging indices $i \leftrightarrow k$ and applying the definition
$$
\dot{\boldsymbol{x}}_i(t) = \sum_{k=1}^N \int_0^\infty \mu_{ik}(s)(\boldsymbol{\eta}_k^t(s)-\boldsymbol{\eta}_i^t(s))\,ds,
$$
the first term expands to:
{\small
\begin{align}\label{eq:Phi_term1}
&-\frac{1}{2}\int_0^\infty\sum_{i,k=1}^N \mu_{ik}(s)\big(\dot{\boldsymbol{x}}_i(t)-\dot{\boldsymbol{x}}_k(t)\big)^T\big(\boldsymbol{\eta}_i^t(s)-\boldsymbol{\eta}_k^t(s)\big)\,ds \notag \\
&= \sum_{i=1}^N \dot{\boldsymbol{x}}_i(t)^T \left( -\sum_{k=1}^N \int_0^\infty \mu_{ik}(s)\big(\boldsymbol{\eta}_i^t(s)-\boldsymbol{\eta}_k^t(s)\big)\,ds \right) = \sum_{i=1}^N \dot{\boldsymbol{x}}_i(t)^T \dot{\boldsymbol{x}}_i(t) = \sum_{i=1}^N \|\dot{\boldsymbol{x}}_i(t)\|^2.
\end{align}
}

For the second term in \eqref{eq:Phi_dot_split}, splitting the integrand and applying integration by parts on $\partial_s(\boldsymbol{\eta}_i^t(s)-\boldsymbol{\eta}_k^t(s))$:
{\small
\begin{align}\label{eq:Phi_term2}
&-\frac{1}{2}\sum_{i,k=1}^N \left(\int_0^\infty \mu_{ik}(s)\,ds\right) \|\boldsymbol{x}_i(t)-\boldsymbol{x}_k(t)\|^2 \notag \\
&\quad + \frac{1}{2}\int_0^\infty \sum_{i,k=1}^N \mu_{ik}(s) (\boldsymbol{x}_i(t)-\boldsymbol{x}_k(t))^T \partial_s (\boldsymbol{\eta}_i^t(s)-\boldsymbol{\eta}_k^t(s))\,ds \notag \\
&= -\frac{1}{2}\sum_{i,k=1}^N \lambda_{ik} \|\boldsymbol{x}_i(t)-\boldsymbol{x}_k(t)\|^2 + \frac{1}{2}\sum_{i,k=1}^N (\boldsymbol{x}_i(t)-\boldsymbol{x}_k(t))^T \left[ \mu_{ik}(s)(\boldsymbol{\eta}_i^t(s)-\boldsymbol{\eta}_k^t(s)) \right]_0^\infty \notag \\
&\quad - \frac{1}{2}\int_0^\infty \sum_{i,k=1}^N \mu'_{ik}(s) (\boldsymbol{x}_i(t)-\boldsymbol{x}_k(t))^T (\boldsymbol{\eta}_i^t(s)-\boldsymbol{\eta}_k^t(s))\,ds \notag \\
&= -\frac{1}{2}\sum_{i,k=1}^N \lambda_{ik} \|\boldsymbol{x}_i(t)-\boldsymbol{x}_k(t)\|^2 - \frac{1}{2}\int_0^\infty \sum_{i,k=1}^N \mu'_{ik}(s) (\boldsymbol{x}_i(t)-\boldsymbol{x}_k(t))^T (\boldsymbol{\eta}_i^t(s)-\boldsymbol{\eta}_k^t(s))\,ds,
\end{align}
}
where $\lambda_{ik} := \int_0^\infty \mu_{ik}(s)\,ds = g_{ik}(0) > 0$. Combining \eqref{eq:Phi_term1} and \eqref{eq:Phi_term2}:
{\small
\begin{equation*}\label{eq:Phi_dot_final}
\dot{\Phi}(t) = \sum_{i=1}^N \|\dot{\boldsymbol{x}}_i(t)\|^2 - \frac{1}{2}\sum_{i,k=1}^N \lambda_{ik}\|\boldsymbol{x}_i(t)-\boldsymbol{x}_k(t)\|^2 - \frac{1}{2}\int_0^\infty \sum_{i,k=1}^N \mu'_{ik}(s)(\boldsymbol{x}_i(t)-\boldsymbol{x}_k(t))^T(\boldsymbol{\eta}_i^t(s)-\boldsymbol{\eta}_k^t(s))\,ds.
\end{equation*}
}

Define the total Lyapunov functional $\mathcal{E}(t) := E_0(t) + \varepsilon \Phi(t)$ for $\varepsilon > 0$. Since $\mu'_{ik}(s) \le 0$, writing $-\mu'_{ik}(s) = |\mu'_{ik}(s)| \ge 0$, we apply Young's inequality with parameter $\alpha > 0$:
{\small
\begin{align}\label{eq:cross_bound}
&\frac{\varepsilon}{2}\int_0^\infty \sum_{i,k=1}^N (-\mu'_{ik}(s))(\boldsymbol{x}_i(t)-\boldsymbol{x}_k(t))^T(\boldsymbol{\eta}_i^t(s)-\boldsymbol{\eta}_k^t(s))\,ds \notag \\
&\le \frac{\varepsilon}{4\alpha}\int_0^\infty \sum_{i,k=1}^N (-\mu'_{ik}(s)) \|\boldsymbol{x}_i(t)-\boldsymbol{x}_k(t)\|^2\,ds + \frac{\varepsilon\alpha}{4} \int_0^\infty \sum_{i,k=1}^N (-\mu'_{ik}(s)) \|\boldsymbol{\eta}_i^t(s)-\boldsymbol{\eta}_k^t(s)\|^2\,ds \notag \\
&= \frac{\varepsilon}{4\alpha} \sum_{i,k=1}^N \mu_{ik}(0) \|\boldsymbol{x}_i(t)-\boldsymbol{x}_k(t)\|^2 + \frac{\varepsilon\alpha}{4} \int_0^\infty \sum_{i,k=1}^N (-\mu'_{ik}(s)) \|\boldsymbol{\eta}_i^t(s)-\boldsymbol{\eta}_k^t(s)\|^2\,ds,
\end{align}
}
where we used $\int_0^\infty (-\mu'_{ik}(s))\,ds = \mu_{ik}(0) - \lim\limits_{t\rightarrow\infty}\mu_{ik}(t) = \mu_{ik}(0)$.

Next, applying the Cauchy--Schwarz inequality to $\dot{\boldsymbol{x}}_i(t) = \sum\limits_{k=1}^N \int_0^\infty \mu_{ik}(s)(\boldsymbol{\eta}_k^t(s)-\boldsymbol{\eta}_i^t(s))\,ds$:
\begin{align}\label{eq:xdot_bound}
\|\dot{\boldsymbol{x}}_i(t)\|^2 &= \left\| \sum_{k=1}^N \int_0^\infty \frac{\mu_{ik}(s)}{\sqrt{-\mu'_{ik}(s)}} \cdot \sqrt{-\mu'_{ik}(s)}\, (\boldsymbol{\eta}_k^t(s)-\boldsymbol{\eta}_i^t(s))\,ds \right\|^2 \notag \\
&\le \left(\sum_{k=1}^N \int_0^\infty \frac{\mu_{ik}(s)^2}{-\mu'_{ik}(s)}\,ds\right) \left(\sum_{k=1}^N \int_0^\infty (-\mu'_{ik}(s))\|\boldsymbol{\eta}_i^t(s)-\boldsymbol{\eta}_k^t(s)\|^2\,ds\right).
\end{align}
By Assumption \ref{assum:g}, $\mu_{ik}(s) \le \frac{1}{\delta}(-\mu'_{ik}(s))$, so the first parenthesis is bounded by:
\begin{equation*}
\sum_{k=1}^N \int_0^\infty \frac{\mu_{ik}(s)^2}{-\mu'_{ik}(s)}\,ds \le \frac{1}{\delta}\sum_{k=1}^N \int_0^\infty \mu_{ik}(s)\,ds = \frac{1}{\delta}\sum_{k=1}^N \lambda_{ik} \le \frac{N\lambda_{\max}}{\delta} =: \Lambda < \infty.
\end{equation*}
Summing \eqref{eq:xdot_bound} over $i \in [N]$ yields:
\begin{equation}\label{eq:sum_xdot_bound}
\varepsilon \sum_{i=1}^N \|\dot{\boldsymbol{x}}_i(t)\|^2 \le \varepsilon \Lambda \int_0^\infty \sum_{i,k=1}^N (-\mu'_{ik}(s)) \|\boldsymbol{\eta}_i^t(s)-\boldsymbol{\eta}_k^t(s)\|^2\,ds.
\end{equation}

Substituting \eqref{eq:cross_bound} and \eqref{eq:sum_xdot_bound} into $\dot{\mathcal{E}}(t) = \dot{E}_0(t) + \varepsilon \dot{\Phi}(t)$:
\begin{align}\label{eq:E_dot_assembled}
\dot{\mathcal{E}}(t) &\le -\varepsilon \sum_{i,k=1}^N \left( \frac{\lambda_{ik}}{2} - \frac{\mu_{ik}(0)}{4\alpha} \right) \|\boldsymbol{x}_i(t)-\boldsymbol{x}_k(t)\|^2 \notag \\
&\quad - \int_0^\infty \sum_{i,k=1}^N (-\mu'_{ik}(s)) \|\boldsymbol{\eta}_i^t(s)-\boldsymbol{\eta}_k^t(s)\|^2 \left( \frac{1}{4} - \frac{\varepsilon\alpha}{4} - \varepsilon\Lambda \right)\,ds.
\end{align}

Setting $\lambda_{\min} := \min\limits_{i,k}\lambda_{ik} > 0$ and $\mu^0_{\max} := \max_{i,k}\mu_{ik}(0) < \infty$, we choose $\alpha > 0$ sufficiently large such that
\begin{equation*}
\frac{\lambda_{ik}}{2} - \frac{\mu_{ik}(0)}{4\alpha} \ge \frac{\lambda_{\min}}{2} - \frac{\mu^0_{\max}}{4\alpha} =: c_x > 0, \qquad \forall i,k \in [N].
\end{equation*}
With $\alpha$ fixed, we choose $\varepsilon > 0$ sufficiently small such that
\begin{equation*}
c_\eta := \frac{1}{4} - \varepsilon\left( \frac{\alpha}{4} + \Lambda \right) > 0.
\end{equation*}

Finally, we establish topological equivalence between $\mathcal{E}(t)$ and $E_0(t)$. Applying Cauchy--Schwarz to $\Phi(t)$:
\begin{flalign*}\label{eq:Phi_equiv}
|\Phi(t)| &\le \frac{1}{2}\int_0^\infty \sum_{i,k=1}^N \mu_{ik}(s) \left( \frac{1}{2}\|\boldsymbol{x}_i(t)-\boldsymbol{x}_k(t)\|^2 + \frac{1}{2}\|\boldsymbol{\eta}_i^t(s)-\boldsymbol{\eta}_k^t(s)\|^2 \right) ds \notag \\
&= \frac{1}{4}\sum_{i,k=1}^N \lambda_{ik} \|\boldsymbol{x}_i(t)-\boldsymbol{x}_k(t)\|^2 + \frac{1}{4}\int_0^\infty \sum_{i,k=1}^N \mu_{ik}(s) \|\boldsymbol{\eta}_i^t(s)-\boldsymbol{\eta}_k^t(s)\|^2 ds \notag \\
&\le \lambda_{\max} \left( \frac{N}{2}\sum_{i=1}^N \|\boldsymbol{x}_i(t)\|^2 \right) + \frac{1}{4}\int_0^\infty \sum_{i,k=1}^N \mu_{ik}(s) \|\boldsymbol{\eta}_i^t(s)-\boldsymbol{\eta}_k^t(s)\|^2 ds \notag \\
&\le \max\{N\lambda_{\max}, 1\} E_0(t) =: C_\Phi E_0(t).
\end{flalign*}
By restricting $\varepsilon \le \frac{1}{2 C_\Phi}$, we ensure $|\varepsilon \Phi(t)| \le \frac{1}{2} E_0(t)$, establishing the uniform bounds:
\begin{equation*}\label{eq:equivalence0}
\frac{1}{2} E_0(t) \le \mathcal{E}(t) \le \frac{3}{2} E_0(t), \qquad \forall t \ge 0.
\end{equation*}

Using $-\mu'_{ik}(s) \ge \delta \mu_{ik}(s)$ from Remark \ref{rem:kernel_implication}, \eqref{eq:E_dot_assembled} yields
\begin{align*}
\dot{\mathcal{E}}(t) &\le -2Nc_x \sum_{i=1}^N \|\boldsymbol{x}_i(t)\|^2 - 4\delta c_\eta \left( \frac{1}{4}\int_0^\infty \sum_{i,k=1}^N \mu_{ik}(s)\|\boldsymbol{\eta}_i^t(s)-\boldsymbol{\eta}_k^t(s)\|^2\,ds \right) \notag \\
&\le -\min\{4Nc_x, 4\delta c_\eta\} E_0(t) \le -\frac{2}{3}\min\{4Nc_x, 4\delta c_\eta\} \mathcal{E}(t) =: -\gamma \mathcal{E}(t).
\end{align*}
By Gr\"onwall's Lemma, $\mathcal{E}(t) \le \mathcal{E}(0)e^{-\gamma t}$, and hence $E_0(t) \le 2\mathcal{E}(t) \le 3E_0(0)e^{-\gamma t}$. This establishes unconditional global exponential consensus for Model I.

\section{Exponential flocking for Second-Order Systems}\label{sec:proof_2nd}

In this section, we establish the exponential flocking behaviors for the second-order Cucker--Smale type models, providing the detailed proofs for Theorem \ref{thm:model4} and Theorem \ref{thm:model3}. The primary analytical difficulty for second-order systems lies in the potential expansion of the spatial diameter, which weakens the instantaneous alignment forces. For Model IV, we address this issue by introducing a novel Lyapunov position-integral functional that explicitly couples the velocity energy with the running maximum of the spatial diameter. Utilizing Dini derivatives, we establish a continuous bootstrap framework that simultaneously guarantees uniform spatial boundedness and exponential velocity alignment under a sharp capacity condition. For Model III, where spatial coupling is entirely absent, we construct a velocity-based multiplier functional to extract dissipation purely from the fading memory, proving unconditional exponential flocking.

\subsection{Exponential flocking for system \eqref{eq:model4}}
By Lemma \ref{lem:uncoupled_transform_2nd}, Model IV can be rewritten as
{\small
\begin{equation}\label{eq:model4_auto}
\begin{cases}
\dot{\boldsymbol{x}}_i(t) = \boldsymbol{v}_i(t), \\[4pt]
\dot{\boldsymbol{v}}_i(t) = \displaystyle\sum_{k=1}^{N} \psi(\|\boldsymbol{x}_i(t)-\boldsymbol{x}_k(t)\|)\bigl(\boldsymbol{v}_k(t)-\boldsymbol{v}_i(t)\bigr) + \sum_{k=1}^{N} \int_0^\infty \mu_{ik}(s)\bigl(\boldsymbol{\xi}_k^t(s)-\boldsymbol{\xi}_i^t(s)\bigr)\,ds.
\end{cases}
\end{equation}
}
We define the velocity energy functional $E_v(t)$:
\begin{equation}\label{eq:Ev}
E_v(t) := \frac12\sum_{i=1}^N\|\boldsymbol{v}_i(t)\|^2 + \frac14\int_0^\infty\sum_{i,k=1}^N \mu_{ik}(s)\|\boldsymbol{\xi}_i^t(s)-\boldsymbol{\xi}_k^t(s)\|^2\,ds.
\end{equation}
We next recall the definition of the upper right and lower right Dini derivative of a continuous function $f$ at a point $t$:
\begin{equation*}\label{dini}
D^+f(t)=\underset{\Delta t>0,\Delta t\rightarrow0}{\limsup}\frac{f\left(t+\Delta t\right)-f(t)}{\Delta t},~~D_+f(t)=\underset{\Delta t>0,\Delta t\rightarrow0}{\liminf}\frac{f\left(t+\Delta t\right)-f(t)}{\Delta t}.
\end{equation*}
\begin{lemma}{\rm (see \cite[Lemma 15.17]{dini-2})}\label{dini-1}
Given differentiable functions $f_1,...,f_m:(a,b)\rightarrow\R,$ the max function $f_{\max}(t)=\max\left\{f_i(t)|i\in[m]\right\}$ satisfies
$$
D^+f_{\max}(t)=\max\left\{\frac{d}{dt}f_i(t)\bigg|i\in {\rm argmax}\left(f_{\max}(t)\right)\right\}.
$$
\end{lemma}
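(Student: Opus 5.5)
The statement is Lemma \ref{dini-1}: for differentiable $f_1,\dots,f_m$ on $(a,b)$, the upper right Dini derivative of $f_{\max}$ equals the largest derivative among the active indices. I will show that the right difference quotient of $f_{\max}$ actually converges to this value, so $D^+$ and $D_+$ coincide. Fix $t\in(a,b)$ and write $I:=\mathrm{argmax}(f_{\max}(t))$, the set of indices $i$ with $f_i(t)=f_{\max}(t)$; it is nonempty. Set $M:=\max_{i\in I} f_i'(t)$.

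\textbf{Step 1: only active indices matter for small steps.} For $j\notin I$ we have $f_j(t)<f_{\max}(t)$. Since $I^c$ is finite and all $f_i$ are continuous, there is $h_0>0$ such that for $0<h<h_0$ every $j\notin I$ satisfies $f_j(t+h)<\max_{i\in I} f_i(t+h)$. Consequently $f_{\max}(t+h)=\max_{i\in I} f_i(t+h)$ for such $h$.

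\textbf{Step 2: rewrite the difference quotient.} Because $f_i(t)=f_{\max}(t)$ for every $i\in I$, for $0<h<h_0$ we get
\[
\frac{f_{\max}(t+h)-f_{\max}(t)}{h}=\max_{i\in I}\frac{f_i(t+h)-f_i(t)}{h}.
\]

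\textbf{Step 3: pass to the limit.} Each quotient on the right tends to $f_i'(t)$ as $h\to0^+$. The maximum of finitely many convergent sequences converges to the maximum of their limits, since $\max$ is continuous on $\R^{|I|}$. Hence the right difference quotient of $f_{\max}$ converges to $M$, and therefore $D^+f_{\max}(t)=D_+f_{\max}(t)=M$, which is the claim.

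\textbf{Main point.} The only step needing care is Step 1, which uses the strict inequality for inactive indices together with the finiteness of the index set. Strict inequality at $t$ persists for small $h$ by continuity, so the argument reduces to bookkeeping. The lemma will later be applied to the squared pairwise distances $\|\boldsymbol{x}_i-\boldsymbol{x}_k\|^2$, which are differentiable, to compute $D^+X_{\max}$ and hence $D^+Y(t)$.
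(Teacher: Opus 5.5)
Your proof is correct and complete. The paper gives no argument for this lemma; it only quotes \cite[Lemma 15.17]{dini-2}. Your proof is the standard self-contained justification: drop the inactive indices for small $h>0$ by continuity and finiteness, rewrite the quotient as $\max_{i\in I}(f_i(t+h)-f_i(t))/h$ using $f_i(t)=f_{\max}(t)$ on $I$, then pass to the limit through the continuous finite maximum.

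Your argument also gives slightly more than the statement. The right derivative of $f_{\max}$ exists, so $D^+f_{\max}(t)=D_+f_{\max}(t)$. Moreover, the hypotheses can be weakened to continuity of every $f_j$ at $t$ plus right-differentiability of the active $f_i$ only.

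On your closing remark: the paper applies the lemma directly to $\|\boldsymbol{x}_i(t)-\boldsymbol{x}_k(t)\|$, which is differentiable only where $\boldsymbol{x}_i(t)\neq\boldsymbol{x}_k(t)$. Its formula for $D^+X_{\max}$ therefore implicitly requires $X_{\max}(t)>0$. Passing to squared distances, as you suggest, removes the differentiability issue for the lemma itself. Converting back to $D^+X_{\max}$ through the square root still needs $X_{\max}(t)>0$, so the case $X_{\max}(t)=0$ must be checked directly. In that case all positions coincide and $X_{\max}(t+h)=hV_{\max}(t)+o(h)$, so the bound $D^+X_{\max}(t)\le V_{\max}(t)$ used later in the paper still holds.
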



\noindent {\bf Proof of Theorem \ref{thm:model4}.} The proof relies on the construction of a non-increasing Lyapunov position-integral functional. We detail the proof in several consecutive steps to rigorously address the variations in the spatial diameter.

\noindent\textit{Step 1: Energy Dissipation Inequality.}
We first evaluate the time derivative of the velocity energy functional $E_v(t)$ defined in \eqref{eq:Ev}. Differentiating the state velocity kinetic component along the trajectories of \eqref{eq:model4_auto} and symmetrically exchanging the indices $i \leftrightarrow k$ yields:
\begin{align}\label{eq:Ev_dot_state}
\frac{d}{dt}\left(\frac12\sum_{i=1}^N\|\boldsymbol{v}_i(t)\|^2\right) &= \sum_{i=1}^N \boldsymbol{v}_i(t)^T \dot{\boldsymbol{v}}_i(t) \notag \\
&= \sum_{i=1}^N \boldsymbol{v}_i(t)^T \Biggl( \sum_{k=1}^{N} \psi(\|\boldsymbol{x}_i(t)-\boldsymbol{x}_k(t)\|)\bigl(\boldsymbol{v}_k(t)-\boldsymbol{v}_i(t)\bigr) \notag \\
&\qquad\qquad\qquad + \sum_{k=1}^{N} \int_0^\infty \mu_{ik}(s)\bigl(\boldsymbol{\xi}_k^t(s)-\boldsymbol{\xi}_i^t(s)\bigr)\,ds \Biggr) \notag \\
&= -\frac12\sum_{i,k=1}^N\psi(\|\boldsymbol{x}_i(t)-\boldsymbol{x}_k(t)\|)\|\boldsymbol{v}_i(t)-\boldsymbol{v}_k(t)\|^2 \notag\\
&\quad -\frac12\int_0^\infty\sum_{i,k=1}^N\mu_{ik}(s)\bigl(\boldsymbol{v}_i(t)-\boldsymbol{v}_k(t)\bigr)^T\bigl(\boldsymbol{\xi}_i^t(s)-\boldsymbol{\xi}_k^t(s)\bigr)\,ds.
\end{align}

Differentiating the velocity memory energy utilizing the transport identity $\partial_t\boldsymbol{\xi}_i^t(s) = \boldsymbol{v}_i(t) - \partial_s\boldsymbol{\xi}_i^t(s)$, we obtain:
\begin{align}\label{eq:Ev_dot_memory}
&\frac{d}{dt}\left(\frac14\int_0^\infty\sum_{i,k=1}^N\mu_{ik}(s)\|\boldsymbol{\xi}_i^t(s)-\boldsymbol{\xi}_k^t(s)\|^2\,ds\right) \notag \\
&= \frac12\int_0^\infty\sum_{i,k=1}^N\mu_{ik}(s)\bigl(\boldsymbol{\xi}_i^t(s)-\boldsymbol{\xi}_k^t(s)\bigr)^T \bigl( \partial_t\boldsymbol{\xi}_i^t(s) - \partial_t\boldsymbol{\xi}_k^t(s) \bigr)\,ds \notag \\
&= \frac12\int_0^\infty\sum_{i,k=1}^N\mu_{ik}(s)\bigl(\boldsymbol{\xi}_i^t(s)-\boldsymbol{\xi}_k^t(s)\bigr)^T \left[ \bigl(\boldsymbol{v}_i(t)-\boldsymbol{v}_k(t)\bigr) - \partial_s\bigl(\boldsymbol{\xi}_i^t(s)-\boldsymbol{\xi}_k^t(s)\bigr) \right] ds \notag \\
&= \frac12\int_0^\infty\sum_{i,k=1}^N\mu_{ik}(s)\bigl(\boldsymbol{v}_i(t)-\boldsymbol{v}_k(t)\bigr)^T\bigl(\boldsymbol{\xi}_i^t(s)-\boldsymbol{\xi}_k^t(s)\bigr)\,ds \notag \\
&\quad - \frac14\int_0^\infty\sum_{i,k=1}^N \mu_{ik}(s)\,\partial_s\|\boldsymbol{\xi}_i^t(s)-\boldsymbol{\xi}_k^t(s)\|^2\,ds.
\end{align}

Summing \eqref{eq:Ev_dot_state} and \eqref{eq:Ev_dot_memory}, and applying integration by parts to the last term of \eqref{eq:Ev_dot_memory}, we arrive at:
{\small
\begin{align}\label{eq:Ev_dot_final}
\dot{E}_v(t) &= -\frac12\sum_{i,k=1}^N\psi(\|\boldsymbol{x}_i(t)-\boldsymbol{x}_k(t)\|)\|\boldsymbol{v}_i(t)-\boldsymbol{v}_k(t)\|^2 \notag \\
&\quad - \frac{1}{4}\sum_{i,k=1}^N \left[ \mu_{ik}(s) \|\boldsymbol{\xi}_i^t(s) - \boldsymbol{\xi}_k^t(s)\|^2 \right]_0^\infty + \frac14\int_0^\infty\sum_{i,k=1}^N \mu_{ik}'(s)\|\boldsymbol{\xi}_i^t(s)-\boldsymbol{\xi}_k^t(s)\|^2\,ds \notag \\
&= -\frac12\sum_{i,k=1}^N\psi(\|\boldsymbol{x}_i(t)-\boldsymbol{x}_k(t)\|)\|\boldsymbol{v}_i(t)-\boldsymbol{v}_k(t)\|^2 + \frac14\int_0^\infty\sum_{i,k=1}^N \mu_{ik}'(s)\|\boldsymbol{\xi}_i^t(s)-\boldsymbol{\xi}_k^t(s)\|^2\,ds\nonumber\\
&\le -2N\psi(X_{\max}(t))\left(\frac12\sum_{i=1}^N\|\boldsymbol{v}_i(t)\|^2\right) - \delta\left(\frac14\int_0^\infty\sum_{i,k=1}^N \mu_{ik}(s)\|\boldsymbol{\xi}_i^t(s)-\boldsymbol{\xi}_k^t(s)\|^2\,ds\right) \notag \\
&\le -\min\bigl\{2N\psi(X_{\max}(t)),\, \delta\bigr\} E_v(t),
\end{align}
}
where in the second to last inequality, we used the decay assumption $\mu_{ik}'(s)\le-\delta\mu_{ik}(s)$, the non-increasing property of $\psi$ which implies $\psi(\|\boldsymbol{x}_i(t)-\boldsymbol{x}_k(t)\|) \ge \psi(X_{\max}(t))$, and the algebraic identity $\sum\limits_{i,k=1}^N \|\boldsymbol{v}_i(t)-\boldsymbol{v}_k(t)\|^2 = 2N \sum\limits_{i=1}^N \|\boldsymbol{v}_i(t)\|^2$.

\noindent\textit{Step 2: Evolution of the Square Root of Energy.}
If $E_v(t_0) = 0$ at some time $t_0 \ge 0$, then in the center-of-mass frame we have $\boldsymbol{v}_i(t_0) = \mathbf{0}$ and historical relative displacements $\boldsymbol{\xi}_i^{t_0}(s)-\boldsymbol{\xi}_k^{t_0}(s) = \mathbf{0}$ for all $i,k \in [N], s \ge 0$. By the uniqueness of solutions to \eqref{eq:model4_auto}, the particles permanently freeze into a static state with $\boldsymbol{v}_i(t) \equiv \mathbf{0}$ and $X_{\max}(t) \equiv X_{\max}(t_0) < \infty$ for all $t \ge t_0$, for which the asymptotic flocking holds trivially.

Thus, without loss of generality, we assume $E_v(t) > 0$ for all $t \ge 0$. Differentiating $\sqrt{E_v(t)}$ and using \eqref{eq:Ev_dot_final}, we obtain:
\begin{equation}\label{eq:sqrt_Ev_dot}
\frac{d}{dt}\sqrt{E_v(t)} = \frac{\dot{E}_v(t)}{2\sqrt{E_v(t)}} \le -\frac{1}{2} \min\bigl\{2N\psi(X_{\max}(t)),\, \delta\bigr\} \sqrt{E_v(t)}, \qquad \forall t \ge 0.
\end{equation}

\noindent\textit{Step 3: Dini Derivative of the Running Maximum.}
From the equation $\dot{\boldsymbol{x}}_i(t) = \boldsymbol{v}_i(t)$, applying Lemma \ref{dini-1} to the spatial diameter $X_{\max}(t) = \max\limits_{i,k\in[N]} \|\boldsymbol{x}_i(t) - \boldsymbol{x}_k(t)\|$ yields:
\begin{flalign}\label{eq:Dini_Xmax}
 D^+ X_{\max}(t) & = \max_{(i,k)\in\operatorname{argmax}(X_{\max}(t))} \frac{(\boldsymbol{x}_i(t) - \boldsymbol{x}_k(t))^T(\boldsymbol{v}_i(t) - \boldsymbol{v}_k(t))}{\|\boldsymbol{x}_i(t) - \boldsymbol{x}_k(t)\|}\nonumber\\
 &\leq \max_{(i,k)\in\operatorname{argmax}(X_{\max}(t))} \|\bv_i(t)-\bv_k(t)\|\nonumber\le 2 \max_{j\in[N]} \|\boldsymbol{v}_j(t)\|\leq 2\sqrt{\sum_{i\in[N]}\|\bv_i(t)\|^2}\nonumber\\
   &  \le 2\sqrt{2}\sqrt{E_v(t)}.
\end{flalign}
We define the running maximum function $Y(t)$ by:
\begin{equation*}\label{eq:Y_def}
Y(t) := \max_{s \in [0,t]} X_{\max}(s), \quad t \ge 0.
\end{equation*}
Clearly, $Y(t)$ is continuous and monotonically non-decreasing, which guarantees for all $t \ge 0,$
$$
D^+ Y(t) \ge 0.
$$
Moreover, for any $t \ge 0$ and $\Delta t > 0$, we have
$$
Y(t+\Delta t) = \max\left\{ Y(t), \max_{s\in[t, t+\Delta t]} X_{\max}(s) \right\}.
$$
If $X_{\max}(t) < Y(t)$, continuity implies $Y(t+\Delta t) = Y(t)$ for small $\Delta t > 0$, whence $D^+ Y(t) = 0$.
If $X_{\max}(t) = Y(t)$, from \eqref{eq:Dini_Xmax} we have
$$
D^+ Y(t)= D^+X_{\max}(t)\le V_{\max}(t) \le 2\sqrt{2}\sqrt{E_v(t)}.
$$
Therefore, for all $t \ge 0,$
\begin{equation}\label{eq:Y_Dini_bound}
0 \le D^+ Y(t) \le 2\sqrt{2}\sqrt{E_v(t)}.
\end{equation}
Since $X_{\max}(t) \le Y(t)$ and $\psi$ is non-increasing, $\psi(X_{\max}(t)) \ge \psi(Y(t))$. Substituting this and \eqref{eq:Y_Dini_bound} into \eqref{eq:sqrt_Ev_dot}:
\begin{flalign}\label{eq:sqrt_Ev_dot_Y}
 \frac{d}{dt}\sqrt{E_v(t)}  & \le -\frac{1}{2} \min\bigl\{2N\psi(Y(t)),\, \delta\bigr\} \sqrt{E_v(t)}\nonumber\\
  &\leq-\frac{1}{4\sqrt{2}} \min\bigl\{2N\psi(Y(t)),\, \delta\bigr\} D^+ Y(t).
\end{flalign}
\noindent\textit{Step 4: Construction of the Lyapunov Functional.}
We define the Lyapunov position-integral functional $L(t)$ by:
\begin{equation*}\label{eq:L_func_def}
L(t) := \sqrt{E_v(t)} + \frac{N}{2\sqrt{2}} \int_{X_{\max}(0)}^{Y(t)} \min\left\{\psi(r),\, \frac{\delta}{2N}\right\} dr, \quad t \ge 0.
\end{equation*}
Applying the chain rule for Dini derivatives to the $C^1$-smooth integral term, we obtain from \eqref{eq:sqrt_Ev_dot_Y}:
\begin{equation*}\label{eq:Dini_L_nonpositive}
D^+ L(t) = \frac{d}{dt}\sqrt{E_v(t)} + \frac{N}{2\sqrt{2}} \min\left\{\psi(Y(t)),\, \frac{\delta}{2N}\right\} D^+ Y(t) \le 0, \quad \forall t \ge 0.
\end{equation*}
Due to the continuity, $L(t)$ is monotonically non-increasing. Hence $L(t) \le L(0) = \sqrt{E_v(0)}$.
Since $\sqrt{E_v(t)} \ge 0$, this yields the uniform bound for all $t \ge 0,$
\begin{equation*}\label{eq:integral_bound}
\frac{N}{2\sqrt{2}} \int_{X_{\max}(0)}^{Y(t)} \min\left\{\psi(r),\, \frac{\delta}{2N}\right\} dr \le \sqrt{E_v(0)}.
\end{equation*}

\noindent\textit{Step 5: Uniform Boundedness of Spatial Diameter.}
Define the spatial capacity function for $y \ge X_{\max}(0),$
$$
\Psi(y) := \frac{N}{2\sqrt{2}} \int_{X_{\max}(0)}^{y} \min\left\{\psi(r),\, \frac{\delta}{2N}\right\} dr.
$$
Since the integrand is strictly positive, $\Psi(y)$ is strictly monotonically increasing.
By the capacity condition \eqref{eq:flocking_capacity_cond},
$$
\lim_{y \to \infty} \Psi(y) > \sqrt{E_v(0)}.
$$
Because $\Psi(Y(t)) \le \sqrt{E_v(0)}$ for all $t \ge 0$, there exists a unique finite root $R_* > X_{\max}(0)$ to the algebraic equation
\begin{equation*}\label{eq:R_star_def}
\frac{N}{2\sqrt{2}} \int_{X_{\max}(0)}^{R_*} \min \left\{ \psi(r),\, \frac{\delta}{2N} \right\} dr = \sqrt{E_v(0)}.
\end{equation*}
Hence
\begin{equation*}\label{eq:Xmax_global_bound}
\sup_{t \ge 0} X_{\max}(t) \le \sup_{t \ge 0} Y(t) \le R_* < \infty.
\end{equation*}

\noindent\textit{Step 6: Exponential Velocity Alignment.}
Since $X_{\max}(t) \le R_*$ for all $t \ge 0$, we have $\psi(X_{\max}(t)) \ge \psi(R_*) > 0$. Inequality \eqref{eq:Ev_dot_final} simplifies to:
\begin{equation*}\label{eq:Ev_exponential}
\dot{E}_v(t) \le -\gamma_v(R_*) E_v(t), \quad \text{where } \gamma_v(R_*) := \min\bigl\{2N\psi(R_*),\, \delta\bigr\} > 0.
\end{equation*}
By Gr\"onwall's Lemma, $E_v(t) \le E_v(0) e^{-\gamma_v(R_*) t}$ for all $t \ge 0$, which immediately implies for all $t \ge 0,$
\begin{equation*}
V_{\max}(t) \le 2\max_{i\in[N]}\|\boldsymbol{v}_i(t)\| \le 2\sqrt{2E_v(t)} \le 2\sqrt{2E_v(0)}\, e^{-\frac{1}{2}\gamma_v(R_*) t}.
\end{equation*}
This completes the proof of Theorem \ref{thm:model4}.

\subsection{Exponential flocking for system \eqref{eq:model3}}
For Model III ($\psi \equiv 0$), equation \eqref{eq:Ev_dot_final} reduces to
$$
\dot{E}_v(t) = \frac14\int_0^\infty\sum_{i,k=1}^N \mu_{ik}'(s)\|\boldsymbol{\xi}_i^t(s)-\boldsymbol{\xi}_k^t(s)\|^2\,ds \le 0.
$$
We introduce the velocity multiplier functional:
\begin{equation}\label{eq:Phi_v}
\Phi_v(t) := -\frac12\int_0^\infty\sum_{i,k=1}^N \mu_{ik}(s)\bigl(\boldsymbol{v}_i(t)-\boldsymbol{v}_k(t)\bigr)^T\bigl(\boldsymbol{\xi}_i^t(s)-\boldsymbol{\xi}_k^t(s)\bigr)\,ds.
\end{equation}

\noindent {\bf Proof of Theorem \ref{thm:model3}.} Differentiating \eqref{eq:Phi_v} along \eqref{eq:model4_auto} with $\psi \equiv 0$ using $\partial_t \boldsymbol{\xi}_i^t(s) = \boldsymbol{v}_i(t) - \partial_s \boldsymbol{\xi}_i^t(s)$:
\begin{align}\label{eq:Phiv_dot_split}
\dot{\Phi}_v(t) &= -\frac12\int_0^\infty\sum_{i,k=1}^N\mu_{ik}(s)\bigl(\dot{\boldsymbol{v}}_i(t)-\dot{\boldsymbol{v}}_k(t)\bigr)^T\bigl(\boldsymbol{\xi}_i^t(s)-\boldsymbol{\xi}_k^t(s)\bigr)\,ds \notag \\
&\quad -\frac12\int_0^\infty\sum_{i,k=1}^N\mu_{ik}(s)\bigl(\boldsymbol{v}_i(t)-\boldsymbol{v}_k(t)\bigr)^T\Bigl[\bigl(\boldsymbol{v}_i(t)-\boldsymbol{v}_k(t)\bigr)-\partial_s\bigl(\boldsymbol{\xi}_i^t(s)-\boldsymbol{\xi}_k^t(s)\bigr)\Bigr]\,ds.
\end{align}

Exchanging indices $i \leftrightarrow k$ and substituting $\dot{\boldsymbol{v}}_i(t) = \sum\limits_{k=1}^N \int_0^\infty \mu_{ik}(s)(\boldsymbol{\xi}_k^t(s)-\boldsymbol{\xi}_i^t(s))\,ds$, the first integral evaluates to:
{\small
\begin{align}\label{eq:Phiv_term1}
&-\frac12\int_0^\infty \sum_{i,k=1}^N \mu_{ik}(s) \big(\dot{\boldsymbol{v}}_i(t)-\dot{\boldsymbol{v}}_k(t)\big)^T \big(\boldsymbol{\xi}_i^t(s)-\boldsymbol{\xi}_k^t(s)\big)\,ds \notag \\
&= \sum_{i=1}^N \dot{\boldsymbol{v}}_i(t)^T \left( -\sum_{k=1}^N \int_0^\infty \mu_{ik}(s)(\boldsymbol{\xi}_i^t(s)-\boldsymbol{\xi}_k^t(s))\,ds \right) = \sum_{i=1}^N \dot{\boldsymbol{v}}_i(t)^T \dot{\boldsymbol{v}}_i(t) = \sum_{i=1}^N \|\dot{\boldsymbol{v}}_i(t)\|^2.
\end{align}
}

For the second integral in \eqref{eq:Phiv_dot_split}, splitting the terms and applying integration by parts on $\partial_s(\boldsymbol{\xi}_i^t(s)-\boldsymbol{\xi}_k^t(s))$, we obtain:
{\footnotesize
\begin{equation}\label{eq:Phiv_term2}
\begin{array}{l}
\displaystyle{-\frac12\int_0^\infty\sum_{i,k=1}^N\mu_{ik}(s)\bigl(\boldsymbol{v}_i(t)-\boldsymbol{v}_k(t)\bigr)^T\Bigl[\bigl(\boldsymbol{v}_i(t)-\boldsymbol{v}_k(t)\bigr)-\partial_s\bigl(\boldsymbol{\xi}_i^t(s)-\boldsymbol{\xi}_k^t(s)\bigr)\Bigr]\,ds\hspace{1cm}}\\
\displaystyle{\hspace{1cm}=-\frac12\sum_{i,k=1}^N \Big(\int_0^\infty \mu_{ik}(s)\,ds\Big) \|\boldsymbol{v}_i(t)-\boldsymbol{v}_k(t)\|^2} \\
\displaystyle{\hspace{3cm}+ \frac12\int_0^\infty \sum_{i,k=1}^N \mu_{ik}(s)(\boldsymbol{v}_i(t)-\boldsymbol{v}_k(t))^T \partial_s (\boldsymbol{\xi}_i^t(s)-\boldsymbol{\xi}_k^t(s))\,ds  }\\
\displaystyle{\hspace{1cm}= -\frac12\sum_{i,k=1}^N \lambda_{ik}\|\boldsymbol{v}_i(t)-\boldsymbol{v}_k(t)\|^2 - \frac12\int_0^\infty\sum_{i,k=1}^N\mu_{ik}'(s)\bigl(\boldsymbol{v}_i(t)-\boldsymbol{v}_k(t)\bigr)^T\bigl(\boldsymbol{\xi}_i^t(s)-\boldsymbol{\xi}_k^t(s)\bigr)\,ds,}
\end{array}
\end{equation}
}
where $\lambda_{ik} = \int_0^\infty \mu_{ik}(s)\,ds = g_{ik}(0) > 0$. Combining \eqref{eq:Phiv_term1} and \eqref{eq:Phiv_term2}:
{\small
\begin{equation*}\label{eq:Phiv_dot_final}
\dot{\Phi}_v(t) = \sum_{i=1}^N\|\dot{\boldsymbol{v}}_i(t)\|^2 -\frac12\sum_{i,k=1}^N\lambda_{ik}\|\boldsymbol{v}_i(t)-\boldsymbol{v}_k(t)\|^2 -\frac12\int_0^\infty\sum_{i,k=1}^N\mu_{ik}'(s)\bigl(\boldsymbol{v}_i(t)-\boldsymbol{v}_k(t)\bigr)^T\bigl(\boldsymbol{\xi}_i^t(s)-\boldsymbol{\xi}_k^t(s)\bigr)\,ds.
\end{equation*}
}

Define the total velocity Lyapunov functional $\mathcal{E}_v(t) := E_v(t) + \varepsilon \Phi_v(t)$ for $\varepsilon > 0$. Applying Young's inequality with parameter $\alpha > 0$:
\begin{align}\label{eq:v_cross_bound}
&\frac{\varepsilon}{2}\int_0^\infty\sum_{i,k=1}^N\bigl(-\mu_{ik}'(s)\bigr)\bigl(\boldsymbol{v}_i(t)-\boldsymbol{v}_k(t)\bigr)^T\bigl(\boldsymbol{\xi}_i^t(s)-\boldsymbol{\xi}_k^t(s)\bigr)\,ds \notag \\
&\le \frac{\varepsilon}{4\alpha}\sum_{i,k=1}^N\mu_{ik}(0)\|\boldsymbol{v}_i(t)-\boldsymbol{v}_k(t)\|^2 +\frac{\varepsilon\alpha}{4}\int_0^\infty\sum_{i,k=1}^N\bigl(-\mu_{ik}'(s)\bigr)\|\boldsymbol{\xi}_i^t(s)-\boldsymbol{\xi}_k^t(s)\|^2\,ds.
\end{align}

Applying Cauchy--Schwarz to $\dot{\boldsymbol{v}}_i(t) = \sum\limits_{k=1}^N \int_0^\infty \mu_{ik}(s)(\boldsymbol{\xi}_k^t(s)-\boldsymbol{\xi}_i^t(s))\,ds$:
\begin{align}\label{eq:vdot_CS}
\|\dot{\boldsymbol{v}}_i(t)\|^2 &= \left\| \sum_{k=1}^N \int_0^\infty \frac{\mu_{ik}(s)}{\sqrt{-\mu'_{ik}(s)}} \cdot \sqrt{-\mu'_{ik}(s)}\, (\boldsymbol{\xi}_k^t(s)-\boldsymbol{\xi}_i^t(s))\,ds \right\|^2 \notag \\
&\le \left(\sum_{k=1}^N\int_0^\infty\frac{\mu_{ik}(s)^2}{-\mu_{ik}'(s)}\,ds\right)\left(\sum_{k=1}^N\int_0^\infty(-\mu_{ik}'(s))\|\boldsymbol{\xi}_i^t(s)-\boldsymbol{\xi}_k^t(s)\|^2\,ds\right).
\end{align}
Since $\mu_{ik}(s) \le \frac{1}{\delta}(-\mu'_{ik}(s))$, the first parenthesis is bounded by
$$
\frac{1}{\delta}\sum_{k=1}^N \lambda_{ik} \le \frac{N\lambda_{\max}}{\delta} =: K^v < \infty.
$$
Summing \eqref{eq:vdot_CS} over $i \in [N]$:
\begin{equation}\label{eq:sum_vdot_bound}
\varepsilon\sum_{i=1}^N\|\dot{\boldsymbol{v}}_i(t)\|^2 \le \varepsilon K^v\int_0^\infty\sum_{i,k=1}^N(-\mu_{ik}'(s))\|\boldsymbol{\xi}_i^t(s)-\boldsymbol{\xi}_k^t(s)\|^2\,ds.
\end{equation}

Combining \eqref{eq:v_cross_bound} and \eqref{eq:sum_vdot_bound} into $\dot{\mathcal{E}}_v(t) = \dot{E}_v(t) + \varepsilon \dot{\Phi}_v(t)$:
\begin{flalign*}\label{eq:Ev_dot_assembled}
\dot{\mathcal{E}}_v(t) &\le -\varepsilon\sum_{i,k=1}^N\left(\frac{\lambda_{ik}}{2}-\frac{\mu_{ik}(0)}{4\alpha}\right)\|\boldsymbol{v}_i(t)-\boldsymbol{v}_k(t)\|^2 \notag\\
&\quad -\int_0^\infty\sum_{i,k=1}^N(-\mu_{ik}'(s))\|\boldsymbol{\xi}_i^t(s)-\boldsymbol{\xi}_k^t(s)\|^2 \left(\frac14-\frac{\varepsilon\alpha}{4}-\varepsilon K^v\right)ds.
\end{flalign*}

Choosing $\alpha > 0$ sufficiently large such that $\frac{\lambda_{ik}}{2}-\frac{\mu_{ik}(0)}{4\alpha} \ge c_v > 0$, and choosing $\varepsilon > 0$ small enough such that $c_\xi := \frac14-\frac{\varepsilon\alpha}{4}-\varepsilon K^v > 0$ and $|\varepsilon\Phi_v(t)| \le \frac12 E_v(t)$, we obtain the equivalence $\frac12 E_v(t) \le \mathcal{E}_v(t) \le \frac32 E_v(t)$ and
\begin{equation*}
\dot{\mathcal{E}}_v(t) \le -\gamma_v \mathcal{E}_v(t)
\end{equation*}
for a uniform rate $\gamma_v > 0$.

By Gr\"onwall's Lemma, $E_v(t) \le 3 E_v(0) e^{-\gamma_v t}$ unconditionally for all $t \ge 0$. Integrating $\dot{\boldsymbol{x}}_i(t) = \boldsymbol{v}_i(t)$ gives:
\begin{align*}
\|\boldsymbol{x}_i(t)-\boldsymbol{x}_k(t)\| &\le \|\boldsymbol{x}_i(0)-\boldsymbol{x}_k(0)\| + \int_0^t \|\boldsymbol{v}_i(\tau)-\boldsymbol{v}_k(\tau)\|\,d\tau \\
&\le X_{\max}(0) + 2\sqrt{2}\int_0^t \sqrt{E_v(\tau)}\,d\tau \\
&\le X_{\max}(0) + 2\sqrt{6E_v(0)}\int_0^t e^{-\frac{\gamma_v}{2} \tau}\,d\tau \\
&\le X_{\max}(0) + \frac{4\sqrt{6E_v(0)}}{\gamma_v} < \infty, \qquad \forall t \ge 0.
\end{align*}
Thus $\sup\limits_{t\ge0} X_{\max}(t) < \infty$, completing the proof of Theorem \ref{thm:model3}.

\section{Conclusion}\label{sec:conclusion}

In this paper, we analyzed the emergent collective behaviors of multi-agent systems driven by an infinite distributed fading memory of Volterra type. By introducing exact Dafermos past-history transformations, the original integro-differential models were rigorously reformulated into dynamical systems defined on extended product Hilbert spaces. For the first-order opinion dynamics, we proved that fading memory intrinsically induces a hidden dissipative mechanism. By constructing auxiliary multiplier functionals to extract this dissipation, we established that global exponential consensus occurs unconditionally, even in the complete absence of instantaneous communication forces. For the second-order velocity alignment model, we provided both unconditional and conditional flocking frameworks. In the pure fading memory regime, we showed that exponential velocity alignment and spatial boundedness emerge unconditionally. When instantaneous communication is present, we established an explicit integral capacity condition connecting the initial phase configurations, the memory decay rate, and the communication weight. Using suitable Lyapunov functionals, we proved the exponential emergence of flocking.

There are several remaining issues that we did not investigate in this work. For example, our current analysis fundamentally relies on symmetric interaction topologies and uniform fading memory kernels among all agents. In reality, the interaction topology is often given by a general digraph, and different agents may possess heterogeneous cognitive memory rates or be subjected to communication noise. Thus, it would be interesting and challenging to extend our results to scenarios with asymmetric couplings, directed network topologies, or heterogeneous time-delays. Rigorous analysis for these remaining issues will be addressed in future research.


\begin{thebibliography}{99}



\bibitem{Blondel2009}
V.~D.~Blondel, J.~M.~Hendrickx, and J.~N.~Tsitsiklis, ``On Krause's multi-agent consensus model with state-dependent connectivity'', \textit{IEEE Trans. Automat. Control}, 54 (2009), 2586--2597.

\bibitem{Boschi2021}
G.~Boschi, C.~Cammarota, and R.~K\"uhn, ``Opinion dynamics with emergent collective memory: the impact of a long and heterogeneous news history'', \textit{Phys. A}, 569 (2021), Paper No. 125799, 19 pp.




\bibitem{dini-2}
F. Bullo,
\newblock \emph{Lectures on Network Systems},
\newblock 1.6$^{nd}$ edition, Kindle Direct Publishing, 2022.

\bibitem{Bullo2009}
F.~Bullo, J.~Cort\'es, and S.~Mart\'inez, \textit{Distributed Control of Robotic Networks: A Mathematical Approach to Motion Coordination Algorithms}, Princeton University Press, Princeton, NJ, 2009.


\bibitem{Camazine2001}
S.~Camazine, J.~L.~Deneubourg, N.~R.~Franks, J.~Sneyd, G.~Theraulaz, and E.~Bonabeau, \textit{Self-Organization in Biological Systems}, Princeton University Press, Princeton, NJ, 2001.

\bibitem{Canuto2012}
C.~Canuto, F.~Fagnani, and P.~Tilli, ``An Eulerian approach to the analysis of Krause's consensus models'', \textit{SIAM J. Control Optim.}, 50 (2012), 243--265.

\bibitem{Carrillo2010}
J.~A.~Carrillo, M.~Fornasier, G.~Toscani, and F.~Vecil, ``Particle, kinetic, and hydrodynamic models of swarming'', in \textit{Mathematical Modeling of Collective Behavior in Socio-Economic and Life Sciences}, Birkh\"auser, Boston, 2010, pp. 297--336.

\bibitem{Castellano2009}
C.~Castellano, S.~Fortunato, and V.~Loreto, ``Statistical physics of social dynamics'', \textit{Rev. Mod. Phys.}, 81 (2009), 591--646.

\bibitem{Chepyzhov2006}
V.~V.~Chepyzhov, E.~Mainini, and V.~Pata, ``Stability of abstract linear semigroups arising from heat conduction with memory'', \textit{Asymptot. Anal.}, 50 (2006), 269--291.

\bibitem{Cho2025}
H.~Cho, S.-Y.~Ha, and M.~Kang, ``Emergent behaviors of a Kuramoto ensemble under fading memory'', \textit{J. Nonlinear Sci.}, 35 (2025), Art. 9.

\bibitem{Chiara}
Y.-P.~Choi, C.~Cicolani, and C.~Pignotti,
``Time-delayed opinion dynamics with leader-follower interactions: consensus, stability, and mean-field limits'',
Commun. Math. Sci. 24 (2026), no. 4, 1073--1102.

\bibitem{Choi2021}
Y.-P.~Choi, A.~Paolucci, and C.~Pignotti, ``Consensus of the Hegselmann--Krause opinion formation model with time delay'', \textit{Math. Methods Appl. Sci.}, 44 (2021), 4560--4579.

\bibitem{Conti2020}
M.~Conti, F.~Dell'Oro, and V.~Pata, ``Exponential decay of a first order linear Volterra equation'', \textit{Math. Eng.}, 2 (2020), 459--471.

\bibitem{Continelli}
E.~Continelli,
``Asymptotic flocking for the Cucker-Smale model with time variable time delays'',
\textit{Acta Appl. Math.}, 188 (2023), Paper No. 15, 23 pp.

\bibitem{Continelli2023}
E.~Continelli and C.~Pignotti, ``Consensus for Hegselmann--Krause type models with time variable time delays'', \textit{Math. Methods Appl. Sci.}, 46 (2023), 18916--18934.

\bibitem{Cucker2007}
F.~Cucker and S.~Smale, ``Emergent behavior in flocks'', \textit{IEEE Trans. Automat. Control}, 52 (2007), 852--862.

\bibitem{Dafermos1970}
C.~M.~Dafermos, ``Asymptotic stability in viscoelasticity'', \textit{Arch. Ration. Mech. Anal.}, 37 (1970), 297--308.

\bibitem{Dong}
J.-G.~Dong, S.-Y.~Ha, J.~Jung, Jinwook, and D.~Kim,
 ``On the stochastic flocking of the Cucker-Smale flock with randomly switching topologies'',
\textit{SIAM J. Control Optim.} 58 (2020), 2332--2353.

\bibitem{Du2025}
L.~Du, S.-Y.~Ha, and H.~Yu, ``Interplay of heterogeneous time-delays and feedback control in the Hegselmann--Krause model'', \textit{J. Differential Equations}, 423 (2025), 597--630.

\bibitem{Hask_memory}
R. Erban and J.~Haskovec, ``Impact of memory on clustering in spontaneous particle aggregation'',  arXiv preprint arXiv:2510.15335 (2025).

\bibitem{Giorgi2001}
C.~Giorgi, J.~E.~Mu\~{n}oz Rivera, and V.~Pata, ``Global attractors for a semilinear hyperbolic equation in viscoelasticity'', \textit{J. Math. Anal. Appl.}, 260 (2001), 83--99.


\bibitem{Ha2009}
S.-Y.~Ha and J.-G.~Liu, ``A simple proof of the  Cucker-Smale flocking
dynamics and mean-field limit'', \textit{Commun. Math. Sci.}, 2 (2009), 297--325.


\bibitem{Ha2}
S.-Y. Ha and E. Tadmor,
``From particle to kinetic and hydrodynamic descriptions of
	flocking'',
\textit{Kinet. Relat. Models}, 1, (2008), 415--435.



\bibitem{Haskovec2021}
J.~Haskovec, ``A simple proof of asymptotic consensus in the Hegselmann--Krause and Cucker--Smale models with normalization and delay'', \textit{SIAM J. Appl. Dyn. Syst.}, 20 (2021), 130--148.



\bibitem{Hegselmann2002}
R.~Hegselmann and U.~Krause, ``Opinion dynamics and bounded confidence: models, analysis and simulation'', \textit{J. Artif. Soc. Soc. Simul.}, 5 (2002), no. 3, Art. 2.


\bibitem{Jabin2014}
P.-E.~Jabin and S.~Motsch, ``Clustering and asymptotic behavior in opinion formation'', \textit{J. Differential Equations}, 257 (2014), 4165--4187.

\bibitem{Jackson2008}
M.~O.~Jackson, \textit{Social and Economic Networks}, Princeton University Press, Princeton, NJ, 2008.

\bibitem{Jadbabaie2003}
A.~Jadbabaie, J.~Lin, and A.~S.~Morse, ``Coordination of groups of mobile autonomous agents using nearest neighbor rules'', \textit{IEEE Trans. Automat. Control}, 48 (2003), 988--1001.

\bibitem{Jiang2025}
M.~Jiang, W.~Su, G.~Ren, and Y.~Yu, ``Memory-driven bounded confidence opinion dynamics: A Hegselmann--Krause model based on fractional-order methods'', arXiv preprint arXiv:2506.04701, (2025).


\bibitem{Liu2023}
Q.~Liu and L.~Chai, ``The memory influence on opinion dynamics in coopetitive social networks: Analysis, application, and simulation'', \textit{IEEE Trans. Control Netw. Syst.}, 10 (2023), 1867--1878.

\bibitem{Liu2026}
Q. Liu, A. Li, and L. Chai, ``Dynamics of opinion propagation with memory and fake news'', \textit{Automatica J. IFAC}, 185 (2026), Paper No. 112745, 14 pp.


\bibitem{OlfatiSaber2007}
R.~Olfati-Saber, J.~A.~Fax, and R.~M.~Murray, ``Consensus and cooperation in networked multi-agent systems'', \textit{Proc. IEEE}, 95 (2007), 215--233.

\bibitem{Pignotti2018}
C.~Pignotti and E.~Tr\'elat, ``Convergence to consensus of the general finite-dimensional Cucker--Smale model with time-varying delays'', \textit{Commun. Math. Sci.}, 16 (2018), 2053--2076.


\bibitem{Cartabia}
M. Rodriguez Cartabia,
``Cucker-Smale model with time delay'',
\newblock{\em Discrete Contin. Dynam. Systems}, 42 (2022), no. 5, 2409--2432.

\bibitem{Su2017}
W.~Su, G.~Chen, and Y.~Hong, ``Noise leads to quasi-consensus of Hegselmann--Krause opinion dynamics'', \textit{Automatica}, 85 (2017), 448--454.

\bibitem{Vicsek1995}
T.~Vicsek, A.~Czir\'ok, E.~Ben-Jacob, I.~Cohen, and O.~Shochet, ``Novel type of phase transition in a system of self-driven particles'', \textit{Phys. Rev. Lett.}, 75 (1995), 1226--1229.

\bibitem{Volterra1930}
V. Volterra, \emph{Theory of functionals and of integral and integro-differential equations}, Dover, New York, 1959.


\bibitem{wang2025agent}
Z. Z. Wang, J. Mao, D. Fried, and G. Neubig, ``Agent Workflow Memory'', in  \textit{Forty-second International Conference on Machine Learning}, (2025), https://openreview.net/forum?id=NTAhi2JEEE.





\bibitem{Fanqin}
F.~Zeng, X.~Xue, and Y.~Zhu,
``Critical exponent for Cucker-Smale model under group-hierarchical multi-leadership'',
\textit{Appl. Math. Lett.}, 136 (2023),  No. 108452, 7 pp.

\end{thebibliography}
\end{document}